\documentclass[11pt]{amsart}

\usepackage[T1]{fontenc}
\usepackage{amsmath,amssymb,mathtools}
\usepackage{microtype}
\usepackage[hidelinks]{hyperref}

\theoremstyle{plain}
\newtheorem{theorem}{Theorem}
\newtheorem{proposition}[theorem]{Proposition}
\newtheorem{lemma}[theorem]{Lemma}
\newtheorem{cor}[theorem]{Corollary}

\theoremstyle{remark}
\newtheorem{remark}[theorem]{Remark}

\newcommand{\cF}{{\mathcal F}}
\newcommand{\cC}{{\mathcal C}}
\newcommand{\cG}{{\mathcal G}}
\newcommand{\cH}{{\mathcal H}}
\newcommand{\cA}{{\mathcal A}}
\newcommand{\bC}{{\mathbf C}}

\newcommand{\LA}{\mathop{}\!\mathrm{La^*}}

\newcommand{\LAw}{\mathop{}\!\mathrm{La}}
\newcommand{\F}{\mathbb F}

\newcommand{\ex}{\mathop{}\!\mathrm{ex}}

\title{On the maximum size of $B_3$-free and $D_s$-free families}
\author{Bal\'azs Patk\'os }
\address{HUN-REN Alfr\'ed R\'enyi Institute of Mathematics and Department of Computer Science and Information Theory, Budapest University of
Technology and Economics} 
\email{patkos@renyi.hu}
\date{}

\begin{document}

\maketitle

\begin{abstract}
    A family $\cG$ of sets is a weak copy of the poset $(P,\leqslant)$ if there exists a bijection $\iota:P\rightarrow \cG$ with $\iota(p)\subset \iota(q)$ whenever $p\leqslant q$. $\cG$ is a strong copy if $\iota(p)\subset \iota(q)$ if and only if $p\leqslant q$ holds. A family is weak (strong) $P$-free if it does not contain any weak (strong) copies of $P$. For a poset $P$, let $e(P)$ ($e^*(P)$) denote the largest positive integer $k$ such that the union of any $k$ consecutive layers of $2^{[n]}$ does not contain a weak (strong) copy of $P$. Ellis, Ivan, and Leader were the first to show the existence of posets $P$ for which there exists a positive real $\varepsilon_P$ such that $\LAw(n,P)\ge (e(P)+\varepsilon_P)\binom{n}{\lfloor n/2\rfloor}$ and $\LA(n,P)\ge (e^*(P)+\varepsilon_P)\binom{n}{\lfloor n/2\rfloor}$ hold, where $\LAw(n,P)$ ($\LA(n,P)$) denotes the maximum size of a weak (strong) $P$-free family $\cF\subseteq 2^{[n]}$. More precisely, they showed that $P=B_d$ are such posets for all $d\ge 4$, where $B_d$ is the Boolean lattice $2^{[d]}$ ordered by inclusion. Very recently, Tompkins showed that the diamond $B_2$ is also such a poset. In this short note, we apply his method to settle the case of the last Boolean poset $B_3$. We show that there exists a positive $\varepsilon$ such that $$\LA(n,B_3)\ge \LAw(n,B_3)\ge \LAw(n,D_6)\ge (3+\varepsilon)\binom{n}{\lfloor n/2\rfloor},$$
    where $D_s$ is the poset on $s+2$ elements $a<b_1,\dots,b_s<c$.

    For any integer $s$, let $m_s=\min\{m:2^m-2\ge s\}$,  $m^*_s=\min\{m:\binom{m}{\lfloor m/2\rfloor}\ge s\}$, and observe $e(D_s)=m_s,e^*(D_s)=m^*_s$. Consider the intervals $I_m=[2^{m-1}-1,2^m-2]$, $I^*_m=[\binom{m-1}{\lfloor \frac{m-1}{2}\rfloor}+1,\binom{m}{\lfloor \frac{m}{2}\rfloor}]$. By results of Griggs, Li, and Lu and of Patk\'os, it is known that for values $s$ in the major initial parts of $I_m$ and $I_m^*$, one has $\LAw(n,D_s)=(m+o(1))\binom{n}{\lfloor \frac{n}{2}\rfloor}$ and $\LA(n,D_s)=(m+o(1))\binom{n}{\lfloor \frac{n}{2}\rfloor}$. The construction of Ellis, Ivan, and Leader shows that the above equalities do not hold for the largest elements of the intervals. As $\LAw(n,D_s)\le \LAw(n,D_{s+1}),\LA(n,D_s)\le \LA(n,D_{s+1})$, there exist   $s_m\in I_m, s^*_m\in I^*_m$ such that for $s\in I_m$ we have $\LAw(n,D_s)=(m+o(1))\binom{n}{\lfloor \frac{n}{2}\rfloor}$ if and only if $s<s_m$ and for $s\in I^*_m$ we have $\LA(n,D_s)=(m+o(1))\binom{n}{\lfloor \frac{n}{2}\rfloor}$ if and only if $s<s^*_m$. Modifying the Tompkins and the Ellis-Ivan-Leader constructions, we obtain upper bounds on $s_m$ and $s^*_m$.
\end{abstract}
\section{Introduction}

For a set $S$, we write $2^S$ to denote its power set and $\binom{S}{k}=\{T\subset S:|T|=k\}$.

Let $(P,\leqslant)$ be a finite poset. A \emph{weak copy} of $P$ in  $2^{[n]}$ is a family $\cG$ of sets such that there exists an order-preserving bijection $\iota:P\rightarrow \cG$, while a \emph{strong copy} requires that $p\leqslant_P q$ if and only if $\iota(p)\subseteq \iota(q)$. A family $\mathcal{F}\subseteq 2^{[n]}$ is \emph{$P$-free} (respectively, \emph{strong $P$-free}) if it contains no weak (respectively, strong) copy of $P$. The corresponding Turán functions are denoted by $\LAw(n,P)$ and $\LA(n,P)$, the maximum sizes of a weak $P$-free family and of a strong $P$-free family in $2^{[n]}$, respectively.

The study of forbidden subposet problems was initiated by Katona and Tarj\'an \cite{KatonaTarjan}, and has since become a much studied topic in extremal set theory. For surveys, see \cite{AMP,GriggsLi} or Chapter 7 of \cite{GerbnerPatkos}. A natural lower bound on both $\LAw(n,P)$ and $\LA(n,P)$ is obtained by taking the union of consecutive middle layers of the $2^{[n]}$. Let $e(P)$ (respectively, $e^*(P)$) denote the largest integer $e$ such that the union of any $e$ consecutive layers is weak $P$-free (respectively, strong $P$-free). Then
\[
\LAw(n,P)\ge \left(e(P)+o(1)\right)\binom{n}{\lfloor n/2\rfloor}
\]
and
\[
\LA(n,P)\ge \left(e^*(P)+o(1)\right)\binom{n}{\lfloor n/2\rfloor}.
\]

Motivated by numerous examples for which these lower bounds are asymptotically tight, Bukh \cite{Bukh} and Griggs and Lu \cite{GriggsLu} conjectured that
the above lower bound on $\LAw(n,P)$ is asymptotically tight and the analogous conjecture for strong copies appeared in \cite{Patkos}. In 2024, Ellis, Ivan, and Leader \cite{EllisIvanLeader} gave counterexamples to the above conjectures by the following theorem. Note that for the Boolean poset $B_d=(2^{[d]},\subseteq )$ we have $e^*(B_d)=e(B_d)=d$.

\begin{theorem}[\cite{EllisIvanLeader}]\label{EIL}
    For every $d\ge 4$ there exists a positive $\varepsilon_d$ such that if $n$ is large enough, then there exists a strong $B_d$-free family $\cF\subseteq 2^{[n]}$ of size $(d+\varepsilon_d)\binom{n}{\lfloor n/2\rfloor}$.
\end{theorem}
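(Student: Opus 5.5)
We build the family as the $d$ middle layers plus an $\varepsilon_d$-fraction of extra sets, and protect against strong copies of $B_d$ by deleting suitable sets from the middle layers. Concretely, take $\cF_0=\bigcup_{i=k}^{k+d-1}\binom{[n]}{i}$ with $k\approx n/2-d/2$, so that $|\cF_0|=(d-o(1))\binom{n}{\lfloor n/2\rfloor}$. Any strong copy of $B_d$ in $2^{[n]}$ needs $d+1$ distinct levels along a maximal chain, so $\cF_0$ is strong $B_d$-free. We then add a family $\cA\subseteq\binom{[n]}{k+d}$ (one level above) and remove from $\binom{[n]}{k}$ a family $\cC$. We want
\[
|\cA|-|\cC|\ge \varepsilon_d\binom{n}{\lfloor n/2\rfloor},
\]
and we want $(\cF_0\setminus\cC)\cup\cA$ to be strong $B_d$-free.

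\textbf{Structure of a potential copy.} Any strong copy of $B_d$ in the new family must use $d+1$ levels. Its top element therefore lies in $\cA$ and its bottom element lies in $\binom{[n]}{k}\setminus\cC$. Since the copy has height $d+1$ and there are exactly $d+1$ levels, every maximal chain of the copy is saturated. Hence the copy is exactly an interval $[X,Y]$ with $|Y\setminus X|=d$, restricted to the family, and in fact the whole interval $2^{Y\setminus X}$ translated by $X$ must be present, since the ranks are forced.

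A strong copy could a priori be non-interval: the bottom is at level $k$, the top at level $k+d$, and $2^d$ elements are spread over intermediate levels with only order relations imposed. But the atoms of the copy are $d$ pairwise incomparable sets above $X$, and the $d+1$ distinct levels force the rank function to agree with the poset rank. I would check that this forces every $\iota(S)$ to be $X\cup\{y_i:i\in S\}$. This rigidity lemma is the first technical step.

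\textbf{Choice of $\cA$ and $\cC$.} It then suffices that for every $Y\in\cA$, every $d$-subset-removed $X=Y\setminus D$ with $|D|=d$ lies in $\cC$; that is, $\cC\supseteq\partial_d\cA$, the $d$-shadow. This is useless if the shadow is large. So instead I would let $\cA$ be sparse, for example the codewords of a code with large minimum distance inside $\binom{[n]}{k+d}$, and exploit the freedom to delete sets at intermediate levels as well. A strong copy needs all $2^d$ elements of the interval, so it suffices to delete just one intermediate set from each interval $[Y\setminus D,Y]$ with $Y\in\cA$.

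Choose $\cA$ to be a family of size $c\binom{n}{k+d}$ such that each set in level $k+d-1$ lies below few members of $\cA$. Then delete, for each $Y\in\cA$, all of its lower shadow $\partial\{Y\}$ at level $k+d-1$? That costs about $n/2$ per $Y$, which is too much.

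The right balance, following Ellis--Ivan--Leader, uses $d\ge4$. Deletions from a middle level $j$ can hit many intervals simultaneously, since a set $Z$ at level $k+d/2$ lies in many intervals $[Y\setminus D,Y]$. So I would take $\cA$ to be a random or algebraic family of density $p$ in level $k+d$. I would then choose a small family $\cC$ at an intermediate level hitting every interval $[Y\setminus D,Y]$, $Y\in\cA$, $|D|=d$, and bound $|\cC|$ by counting. Each interval contains $\binom{d}{j}$ sets at relative level $j$. A covering/design argument, for instance taking $\cC$ random of density $q$ and fixing the few unhit intervals by extra deletions, gives cost
\[
q\binom{n}{\lfloor n/2\rfloor}+p\,n^{d}(1-q)^{\binom{d}{j}}\binom{n}{\lfloor n/2\rfloor}\cdot(\text{normalization}),
\]
and the hope is that $(1-q)^{\binom{d}{\lfloor d/2\rfloor}}$ beats the polynomial factors when the parameters are scaled properly.

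\textbf{Main obstacle.} The main obstacle is this covering step: making the saving $|\cA|$ exceed the deletions by a constant fraction of $\binom{n}{\lfloor n/2\rfloor}$. A naive count fails because each $Y$ lies in about $n^d$ intervals. I would first try a structured construction: partition $[n]$ into $d$-ish blocks, or use a coordinate-sum (mod $m$) condition defining both $\cA$ and $\cC$, so that every interval below a set of $\cA$ automatically meets $\cC$. The reason is that among the $2^d$ subset sums of $D$ some residue condition is always hit. Checking that such a modular construction leaves density gain $\varepsilon_d>0$ precisely when $d\ge4$, where $2^d$ subsets give enough residues relative to the $d$ levels, is where I expect the real work to be.
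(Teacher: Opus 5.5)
There is a genuine gap: you never construct $\cC$, and the step you call the main obstacle is the whole content of the theorem. Your framework is sound. You can take $\cA=\binom{[n]}{k+d}$ in full, and your rigidity step is correct: the chain of $B_d$ through $S$ forces $\iota(S)$ onto level $|X|+|S|$, and counting then forces the copy to be all of $[X,Y]$. So what is needed is a family $\mathcal K$ on one intermediate layer $k+j$, of density bounded away from $0$, containing no complete slice $\{X\cup S: S\in\binom{D}{j}\}$ with $|D|=d$.

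Neither concrete route you suggest produces such a family.
\begin{itemize}
\item \emph{Random deletion plus alteration.} The number of unhit intervals is about $|\cA|\binom{k+d}{d}(1-q)^{\binom{d}{j}}$. Since $d$ is fixed, $(1-q)^{\binom dj}$ is a constant and cannot beat the factor $n^d$ unless $q\to 1$. But then $|\cA|-|\cC|\le (1-q+o(1))\binom{n}{\lfloor n/2\rfloor}=o(\binom{n}{\lfloor n/2\rfloor})$. Discarding bad tops $Y$ instead does not help, because for fixed $q<1$ almost every $Y$ is bad.
\item \emph{The modular idea.} Consider the form where the top layer is full and membership of $Z$ in the middle layer depends only on $\sum_{z\in Z}g(z)$ in a fixed finite abelian group. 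This fails outright. Choose the $d$ elements of $D$ with equal $g$-values; then all $\binom dj$ sets of the slice have the same sum. Some value class has at least $n/|G|$ elements, and a typical half-size set contains $j$ of them and misses $d-j$ of them. Hence almost every $(k+j)$-set has the form $X\cup S$ for such a $D$, and the kept part has density $o(1)$.
\end{itemize}

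The missing idea is a linear-algebraic pigeonhole rather than a covering count. This is what Ellis, Ivan and Leader do, and what the paper relies on in Section 3 (the families $\cF_{n,4}$ and $\cG_{n,m}$) and in the final Remark.
\begin{itemize}
\item \emph{The middle layer.} Let $r=\lfloor n/2\rfloor$, pick $\phi:[n]\to\F_2^r$ uniformly at random, and keep $\mathcal K=\{Z\in\binom{[n]}{r}:\phi(Z)\text{ is a basis of }\F_2^r\}$.
\item \emph{No $(r,4)$-daisy.} Suppose all six sets $X\cup\{x,y\}$ with $\{x,y\}\subseteq T$, $|T|=4$, $|X|=r-2$ were in $\mathcal K$. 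Then the four elements of $T$ would have pairwise distinct nonzero images in $\F_2^r/\langle\phi(X)\rangle\cong\F_2^2$, which has only three nonzero vectors.
\item \emph{Density.} $\mathbb E|\mathcal K|=\prod_{i=1}^r(1-2^{-i})\binom nr\ge\Delta_2\binom nr>0.28\binom nr$.
\item \emph{The family.} With $j=\lfloor d/2\rfloor$, take the full layers $\binom{[n]}{i}$ for $r-j\le i\le r-j+d$, $i\ne r$, together with $\mathcal K$ on layer $r$.
\item \emph{Why it is $B_d$-free.} Any copy of $B_d$ is a full interval $[X,Y]$ with $|X|=r-j$. Its level-$r$ slice contains the daisy with stem $X\cup U$, where $U\subseteq Y\setminus X$ and $|U|=j-2$, and four further petals from $Y\setminus X$. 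This requires exactly $2\le j\le d-2$.
\end{itemize}
That condition is where $d\ge 4$ enters, not any count of residues among subset sums. The construction gives $\varepsilon_d$ arbitrarily close to $\Delta_2\approx 0.29$.
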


Very recently, Tompkins \cite{Tompkins} altered the method of Ellis, Ivan, and Leader to obtain  large strong $B_2$-free families.

\begin{theorem}[\cite{Tompkins}]\label{tomp}
    There exists a positive $\varepsilon_2$ such that if $n$ is large enough, then there exists a strong $B_2$-free family $\cF\subseteq 2^{[n]}$ of size $(2+\varepsilon_2)\binom{n}{\lfloor n/2\rfloor}$. Moreover, $\varepsilon_2$ can be chosen to be $0.14$.
\end{theorem}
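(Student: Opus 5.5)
The plan is to reduce the problem to a combinatorial question about a region $R\subseteq\mathbb Z^2$, in the spirit of Ellis, Ivan and Leader. Split $[n]=A\cup B$ with $|A|=|B|=n/2$. Assign to each $F\subseteq[n]$ the lattice point
\[
\phi(F)=\bigl(|F\cap A|-n/4,\ |F\cap B|-n/4\bigr).
\]
For a finite-width region $R\subseteq\mathbb Z^2$ near the anti-diagonal, set $\cF_R=\{F:\phi(F)\in R\}$. Write $\mu(p)$ for the number of sets $F$ with $\phi(F)=p$. For $p=(x,y)$ with $x,y=O(\sqrt n)$, $\mu(p)$ is a product of two binomial coefficients. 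Summing over a fixed anti-diagonal $x+y=c$, with $c$ bounded, gives $(1+o(1))\binom{n}{\lfloor n/2\rfloor}$. It also gives a Gaussian weighting along the diagonal, which becomes flat after averaging when $R$ is periodic in the direction $(1,-1)$. Hence, if $R$ is periodic along $(1,-1)$ and meets the anti-diagonals in $2+\varepsilon$ points on average, then $|\cF_R|=(2+\varepsilon-o(1))\binom{n}{\lfloor n/2\rfloor}$.

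The main step is to translate strong diamond-freeness of $\cF_R$ into a condition on $R$. Let $a\subset d$ be sets in $\cF_R$ with $\phi(a)=u$ and $\phi(d)=v$. The sets strictly between $a$ and $d$ with a prescribed image $w$ number $\binom{v_1-u_1}{w_1-u_1}\binom{v_2-u_2}{w_2-u_2}$. Two distinct such sets are incomparable, and so are sets with incomparable images. So $\cF_R$ contains a strong diamond with bottom $a$ and top $d$ exactly when one of the following holds:
\begin{itemize}
\item[(i)] some $w\in R$ with $u<w<v$ has multiplicity at least $2$ in the above count; or
\item[(ii)] $R$ contains two coordinatewise incomparable points strictly between $u$ and $v$.
\end{itemize}
Multiplicity $1$ forces $w-u$ or $v-w$ to be a multiple of a unit vector. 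So a $3$-chain $u<w<v$ in $R$ is harmless only in a configuration such as $u=(0,0)$, $w=(1,0)$, $v=(1,1)$ with $(0,1)\notin R$. Longer chains must be checked similarly. I would state this as a lemma: $\cF_R$ is strong $B_2$-free if and only if $R$ satisfies the corresponding purely lattice condition. Its proof is just the interval count above.

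Next I construct $R$. The two-layer family corresponds to $R=\{x+y\in\{0,1\}\}$. To beat it I would use a periodic staircase. On each period, $R$ mostly keeps two consecutive anti-diagonals. At selected positions it adds a third point, forming an ``L-shaped'' chain $(x,y),(x+1,y),(x+1,y+1)$. It then deletes the competing middle point $(x,y+1)$, together with any points that would create configuration (ii) or a longer bad chain. The net gain per period must be positive. I would first search small periods by hand or by a finite computer check for the best density. Tompkins' value $\varepsilon_2=0.14$ suggests that the densities need not be integral per anti-diagonal. If so, I would let the split be unbalanced, $|A|=\alpha n$, and tilt the periodic direction. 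Then the Gaussian weights give fractional densities, and $\alpha$ can be optimized.

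I expect the main obstacle to be the design of $R$. The naive modification adds one third-layer point but must delete about two points, one middle and one bottom, so it only breaks even. The gain has to come from reusing each deleted point to certify several added points. One way this happens is when the unit-step chains in different periods share their ``forbidden middle''. Another is when the unequal weights from $\alpha\ne 1/2$ make added points heavier than deleted ones. Once a valid $R$ with average density $2+\varepsilon$ is found, the lemma and the weight computation of the first paragraph finish the proof for all sufficiently large $n$.
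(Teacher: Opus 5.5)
Your reduction of strong $B_2$-freeness of $\cF_R$ to a condition on $R$ is correct. The trouble is the step you postpone, finding $R$: it cannot succeed, because no choice of $R$, split or tilt gives any $\varepsilon>0$.

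Apply your criterion (i) to three points of $R$ on one vertical line, $(x,y_1),(x,y_2),(x,y_3)$ with $y_1<y_2<y_3$. The middle profile has multiplicity $\binom{y_3-y_1}{y_2-y_1}\ge 2$, so $\cF_R$ contains a strong diamond; the same holds for horizontal lines. Hence every row and every column of $R$ contains at most two points.

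For bounded-width $R$ invariant under $(p,-p)$, one period is represented by the $p$ columns $0\le x<p$. It therefore contains at most $2p$ points, and the density is at most $2$. This is why your naive modification only breaks even, and no sharing of deleted points can repair it.

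Unbalancing the split or tilting does not help either. In absolute coordinates $x=|F\cap A|$, $y=|F\cap B|$, regard $R$ as the edge set of a bipartite graph between columns and rows. It has maximum degree $2$, so by K\H{o}nig it is a union of two matchings. By the rearrangement inequality, a matching has weight $\sum\binom{|A|}{x}\binom{|B|}{y}$ at most that of the sorted pairing. That pairing can be taken along the anti-diagonal $x+y=\lfloor n/2\rfloor$, so its weight is the Vandermonde sum $\binom{n}{\lfloor n/2\rfloor}$. So every strong $B_2$-free union of profile classes of a two-part partition has at most $2\binom{n}{\lfloor n/2\rfloor}$ sets.

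The missing idea is to break the $S_A\times S_B$ symmetry with random linear algebra. This is Tompkins' method, which the paper adapts in Section 2. The Ellis--Ivan--Leader construction is also of this kind, not a profile construction as you suggest.

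The three-layer analogue of $\cF_\phi$ works:
\begin{itemize}
\item Take $k=\lfloor n/2\rfloor$ and random labels $\phi(i)\in\F_q^k$.
\item Keep the $(k-1)$-sets $A$ with $\dim W(A)=k-1$.
\item Keep the $k$-sets with $\dim W(A)\le k-1$.
\item Keep the $(k+1)$-sets with $\dim W(A)\neq k-1$.
\end{itemize}
A diamond $A\subset A\cup\{x\},A\cup\{y\}\subset A\cup\{x,y\}$ would force $W(A\cup\{x,y\})=W(A)$, which has dimension $k-1$, a contradiction. Corollary~\ref{cor} with $(a,b)=(1,-1),(0,0),(1,1)$ gives expected size $\bigl(2+\Delta_q(\frac{1}{q-1}-\frac{q^2}{(q-1)^2(q^2-1)})+o(1)\bigr)\binom{n}{k}$. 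Taking $q=4$ gives about $2.148\binom{n}{k}$, so $\varepsilon_2=0.14$ is achieved.
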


In this note, we settle the case of the last remaining Boolean poset $B_3$ by using the ideas of \cite{Tompkins}. The generalized diamond poset $D_s$ has $s+2$ elements $a<b_1,b_2,\dots,b_s<c$.  Note that a weak $D_{2^d-2}$-free family is strong $B_d$-free.

\begin{theorem}\label{B3D6}
    There exists a positive $\varepsilon$ such that if $n$ is large enough, then there exists a weak $D_6$-free and therefore strong $B_3$-free family $\cF\subseteq 2^{[n]}$ of size $(3+\varepsilon)\binom{n}{\lfloor n/2\rfloor}$ and $\varepsilon$ can be chosen as $0.22$.
\end{theorem}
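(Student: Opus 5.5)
We will build $\cF$ as three full consecutive middle layers plus a small extra piece. The extra piece comes from a fourth layer, chosen so that no weak copy of $D_6$ appears. Following the approach of Tompkins, we split $[n]=X\cup Y$ with $|Y|=k$ a fixed constant (to be optimized) and $|X|=n-k$. For each set $F$ we record its \emph{profile} $(|F\cap X|,F\cap Y)$.

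Let $\ell=\lfloor n/2\rfloor$. The family will be of the form
\[
\cF=\{F:\ |F|\in\{\ell-1,\ell,\ell+1\}\}\setminus\cA\ \cup\ \cH,
\]
where $\cH$ consists of sets of size $\ell+2$ (or $\ell-2$). These are selected by a rule depending only on $F\cap Y$ together with a parity or residue condition on $|F\cap X|$. The set $\cA$ consists of the sets of the three layers that we must delete to destroy the weak $D_6$'s created by $\cH$.

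First, we describe the weak copies of $D_6$ in four consecutive layers. Since three layers contain none, $c$ must lie in $\cH$ and $a$ in layer $\ell-1$. The six middle elements then lie in the interval $[a,c]$, which is a copy of $B_3$ minus its top and bottom. That interval contains exactly six sets (three of size $|a|+1$ and three of size $|a|+2$), so a $D_6$ with $|c|-|a|=3$ needs \emph{all} six of them in $\cF$. Hence it suffices to ensure that for every $c\in\cH$ and every $(\ell-1)$-subset $a\subset c$, at least one of the six sets strictly between $a$ and $c$ is missing from $\cF$. Other configurations, such as $a$ of size $\ell$ with $c$ in $\cH$, have only two middle levels available ($|c|-|a|=2$). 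The interval then has just two middle sets, so they cannot host six $b$'s. Thus the single condition is the whole requirement.

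Second, we choose $\cH$ and $\cA$ through the $Y$-coordinates. Put $c\in\cH$ iff $c\cap Y=Y$ (or $c\cap Y$ lies in a suitable subfamily $\cC\subseteq 2^Y$). Put $\cA$ to be the sets $G$ in layer $\ell$ with $|G\cap Y|\le |Y|-2$ (relative to a chosen pattern) lying below some element of $\cH$. The aim is that every interval $[a,c]$ with $|c\setminus a|=3$ contains a removed set. If $c\setminus a$ meets $Y$ in at least two elements, some middle set drops two $Y$-elements and so lies in $\cA$. The problematic intervals are those where $c\setminus a\subseteq X$ or meets $Y$ in one element. We handle these by additionally restricting $\cH$ by a congruence on $|c\cap X|$ and removing from layer $\ell+1$ those sets whose $X$-part lies in a complementary residue class, in the spirit of the Ellis–Ivan–Leader modular trick.

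Third, we compute sizes. By the local central limit theorem, the $X$-part of a random middle-layer set is asymptotically uniform over the relevant residue classes, while the $Y$-part is $\mathrm{Bin}(k,1/2)$. Hence $|\cH|-|\cA|$ equals $\binom{n}{\ell}$ times an explicit finite expression in $k$ and the residue parameters. We then optimize numerically to reach a gain of $0.22$.

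The main obstacle is the second step: designing the removal so that \emph{every} interval of length $3$ topped by $\cH$ is broken, including those with $c\setminus a\subseteq X$, while deleting noticeably fewer sets than are added. I would first try Tompkins' exact $B_2$ template shifted up one layer, checking by case analysis on $|(c\setminus a)\cap Y|\in\{0,1,2,3\}$, and adjust $\cC$ until the count is positive.
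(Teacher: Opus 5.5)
Your first step is correct and is the same reduction the paper uses: in four consecutive layers a weak $D_6$ forces some $a$ in the bottom layer and $c\supset a$ in the top layer, with all six sets strictly between them in the family. The gap is in your second step, and it cannot be repaired within the class of constructions you describe. Inside a fixed layer, $|F\cap X|=|F|-|F\cap Y|$, so a residue condition on $|F\cap X|$ is only a condition on $F\cap Y$. Hence in your scheme membership of $F$ depends only on $(|F|,F\cap Y)$.

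Fix a trace $T\in\cC$ used by $\cH$, and take $c\in\cH$ with $c\cap Y=T$ and $a\subset c$ with $c\setminus a\subseteq X$. Then $a$ and all six intermediate sets have trace $T$. To break all these intervals you must therefore delete \emph{every} set of trace $T$ from one of the layers $\ell-1,\ell,\ell+1$. That class has $\binom{n-|Y|}{\ell+2-j-|T|}$ elements for some $j\in\{1,2,3\}$. This is $(1+o(1))$ times the size $\binom{n-|Y|}{\ell+2-|T|}$ of the class of trace $T$ you put into $\cH$, and distinct traces need distinct deletions. So $|\cH|-|\cA|=o\bigl(\binom{n}{\ell}\bigr)$ for every choice of $|Y|$, $\cC$ and residues.

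In fact no family whose membership depends only on $(|F\cap X|,F\cap Y)$ with $|Y|=O(1)$ can beat $3$. On each fiber $\{F: F\cap Y=T\}$ such a family is a union of full layers of $2^X$. Four such layers at levels $y_1<y_2<y_3<y_4$ already contain a weak $D_6$, since levels $y_2,y_3$ of an interval of length $d=y_4-y_1\ge 3$ hold at least $2d\ge 6$ sets. So the family has at most $3\cdot 2^{|Y|}\binom{n-|Y|}{\lfloor (n-|Y|)/2\rfloor}=(3+o(1))\binom{n}{\lfloor n/2\rfloor}$ members.

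The missing idea is a non-symmetric, algebraic structure in which the interval condition propagates, instead of forcing whole classes to be removed. This is what Tompkins' method actually is; it is not a profile split $X\cup Y$. The paper takes a random labeling $\phi\colon[n]\to\F_q^k$ with $k=\lfloor n/2\rfloor$ and writes $W(A)$ for the span of $\phi(A)$. It keeps:
\begin{itemize}
\item the $(k-1)$-sets with $\dim W(A)=k-1$;
\item the $k$-sets with $\dim W(A)\le k-1$;
\item all $(k+1)$-sets;
\item the $(k+2)$-sets with $\dim W(A)\ne k-1$.
\end{itemize}
If $a$ and $a\cup\{x\}$, $a\cup\{y\}$, $a\cup\{z\}$ are all present, then $W(a\cup\{x\})=W(a)$ for each of $x,y,z$. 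Hence $W(a\cup\{x,y,z\})=W(a)$ has dimension $k-1$, and the top set is absent.

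The layer densities are rank probabilities of random matrices over $\F_q$: $\frac{q}{q-1}\Delta_q$, $1-\Delta_q$, $1$, and $1-\frac{q^4}{(q^3-1)(q^2-1)(q-1)^2}\Delta_q$. These sum to $3+f(q)\Delta_q$, and $f(3)\Delta_3=\frac{335}{832}\Delta_3>0.22$. Your third step (local limit theorem plus numerical optimization) has nothing to optimize until such a construction is in place.
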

We then investigate the analogous extremal problems for generalized diamonds $D_s$.
For any integer $s$, let $m_s=\min\{m:2^m-2\ge s\}$,  $m^*_s=\min\{m:\binom{m}{\lfloor m/2\rfloor}\ge s\}$. Observe that $m_s$ is the smallest integer $m$ such that $B_m$ contains $s$ elements apart from $\emptyset$ and $[n]$, and $m_s^*$ is the smallest integer $m$ with $B_m$ containing an antichain of size $s$. Therefore, we have $e(D_s)=m_s,e^*(D_s)=m^*_s$. Consider the intervals $$I_m=[2^{m-1}-1,2^m-2], \hskip 1truecm I^*_m=\left[\binom{m-1}{\lfloor \frac{m-1}{2}\rfloor}+1,\binom{m}{\lfloor \frac{m}{2}\rfloor}\right].$$
Griggs, Li, and Lu \cite{GriggsLiLu} showed $\LAw(n,D_s)=(e(D_s)+o(1))\binom{n}{\lfloor n/2\rfloor}$ whenever $2^{m-1}-1\le s\le 2^m-\binom{m}{\lfloor m/2\rfloor}-1$ for some $m$. So for all but the last $\binom{m}{\lfloor m/2\rfloor}-1$ elements of $I_m$, the equality $\LAw(n,D_s)=(e(D_s)+o(1))\binom{n}{\lfloor n/2\rfloor}$ holds. Similarly, in \cite{Patkos}, it was shown that for any $\varepsilon>0$ there exists $m_0=m_0(\varepsilon)$ such that for any $m\ge m_0$ we have $\LA(n,D_s)=(m+o(1))\binom{n}{\lfloor \frac{n}{2}\rfloor}$ for all but the last $\varepsilon\binom{m}{\lfloor \frac{m}{2}\rfloor}$ elements of $I^*_m$.

The construction of Ellis, Ivan, and Leader shows that this is not true for the largest element of $I_m$ for all values of $m\ge 4$ and for the largest element of $I^*_m$ if $m\ge 4$ is even. As $D_s$ is a strong subposet of $D_{s+1}$, we have $\LAw(n,D_s)\le \LAw(n,D_{s+1}),\LA(n,D_s)\le \LA(n,D_{s+1})$. Therefore there exist  integers $s_m\in I_m, s^*_m\in I^*_m$ such that 
\begin{itemize}
    \item 
    for $s\in I_m$, we have $\LAw(n,D_s)=(m+o(1))\binom{n}{\lfloor \frac{n}{2}\rfloor}$ if and only if $s<s_m$,
    \item 
    for $s\in I^*_m$ we have $\LA(n,D_s)=(m+o(1))\binom{n}{\lfloor \frac{n}{2}\rfloor}$ if and only if $s<s^*_m$.
\end{itemize} 
Note that the result of Griggs, Li, and Lu states that $s_m>2^m-1-\binom{m}{\lfloor \frac{m}{2}\rfloor}$ and the result of \cite{Patkos} shows $s^*_m=(1-o(1))\binom{m}{\lfloor \frac{m}{2}\rfloor}$.

Modifying the constructions of Tompkins and Ellis-Ivan-Leader and sharpening the calculations of \cite{Patkos}, we obtain bounds on $s_m$ and $s^*_m$.

\begin{theorem}\label{sm}
    There exists $m_0$ such that for all $m\ge m_0$ we have $s_m\le 2^m-0.84\binom{m}{\lfloor \frac{m}{2}\rfloor}$. Furthermore, $s_3=5$, $s_4\le 11$.
\end{theorem}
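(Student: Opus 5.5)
The upper bounds will come from explicit constructions, and the lower bound $s_3\ge 5$ from the quoted theorem of Griggs, Li, and Lu. For $m=3$ we have $2^3-1-\binom{3}{1}=4$, so Griggs, Li, and Lu give $s_3>4$, that is, $s_3\ge 5$. Hence every claimed upper bound $s_m\le s$ reduces to one task: exhibit a weak $D_s$-free family of size $(m+\varepsilon)\binom{n}{\lfloor n/2\rfloor}$, with $s=5$ for $m=3$, $s=11$ for $m=4$, and $s=\lceil 2^m-0.84\binom{m}{\lfloor m/2\rfloor}\rceil$ for large $m$. Since the poset $D_s$ is $a<b_1,\dots,b_s<c$ with the $b_i$ unordered, a family $\cF$ is weak $D_s$-free if and only if every pair $A\subsetneq C$ in $\cF$ has at most $s-1$ members of $\cF$ strictly between them. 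All the work is therefore controlling these interval counts.

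\textbf{Construction.} I would follow the template behind Theorem~\ref{tomp} and Theorem~\ref{B3D6}. Split $[n]=X\cup Y$ with $|Y|=t$ a fixed constant. Include every set $F$ whose weighted level $\ell(F)=|F\cap X|+w(F\cap Y)$ lies in a window of width slightly more than $m$ around $n/2$. Here $w:2^Y\to\mathbb{R}$ is a weight function chosen so that, depending on the trace on $Y$, each set is placed on one of two interleaved level-shifts. A positive fraction of sets then occupies an effective ``$(m+1)$-st layer''. Since $t$ is fixed, the normal approximation to $\binom{|X|}{k}$ makes the family size $(m+\varepsilon)\binom{n}{\lfloor n/2\rfloor}$, where $\varepsilon$ is the proportion of traces on $Y$ assigned the extra level. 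This reproduces the counting of Tompkins and of Ellis--Ivan--Leader.

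\textbf{Counting intervals.} Only pairs $A\subsetneq C$ in $\cF$ with $|C\setminus A|\ge m$ can cause trouble. When $|C\setminus A|=m$ exactly, the $2^m-2$ sets strictly between them are indexed by the nonempty proper subsets $S\subsetneq C\setminus A$, and membership of $A\cup S$ in $\cF$ depends only on $|S\cap X|$ and on $(A\cup S)\cap Y$. I would classify such pairs by $j=|(C\setminus A)\cap Y|$. Choosing $w$ so that a jump of size $m$ in $|F|$ forces a jump of $m$ in $\ell$ only if a certain part of $Y$ is crossed, I would show that the interior sets $A\cup S$ with $|S|$ near $m/2$ and the ``wrong'' $Y$-pattern fall outside the window. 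The number of missing interior sets is then at least a fixed fraction of $\binom{m}{\lfloor m/2\rfloor}$. Pairs with $|C\setminus A|>m$ must be excluded by the window width together with $w$, so that any two members of $\cF$ differ in size by at most $m$.

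\textbf{Optimization and main obstacle.} For large $m$ the key step is a local-limit computation. The interior count is $2^m-2$ minus the number of subsets of a middle-ish layer of $2^{[m]}$ cut off by the weight threshold. One has to choose $t$ and $w$ so that, in the worst case over $j$ and over the placement of $A$, the deficit is at least $0.16\binom{m}{\lfloor m/2\rfloor}+O(1)$, while $\varepsilon$ stays positive. Essentially this sharpens the calculation of \cite{Patkos}, run in reverse. I expect this to be the main obstacle: balancing the worst-case pair against the gain $\varepsilon$, where the constant $0.84$ emerges from optimizing a Gaussian tail. For $m=3$ and $m=4$ I would instead take $t$ small ($t=2$ or $3$) and choose $w$ by hand, as in the proof of Theorem~\ref{B3D6}. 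I would then verify by finite case analysis over the traces on $Y$ that every pair at distance $3$ (resp.\ $4$) has at most $4$ (resp.\ $10$) interior members. This case check is routine but delicate, since for $D_5$ it must improve the $D_6$ bound of Theorem~\ref{B3D6} by exactly one.
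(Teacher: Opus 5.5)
\textbf{The construction itself fails, for every choice of $t$ and $w$.} The problem is the template, not the optimization.

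For a trace $T\subseteq Y$, the admitted $X$-levels $L_T=\{j: j+w(T)\text{ lies in the window}\}$ form a run of consecutive integers. The slice $\{F: F\cap Y=T\}$ contributes at most $|L_T|\binom{n-t}{\lfloor (n-t)/2\rfloor}$ sets. Since $2^t\binom{n-t}{\lfloor (n-t)/2\rfloor}=(1+o(1))\binom{n}{\lfloor n/2\rfloor}$, a total of $(m+\varepsilon)\binom{n}{\lfloor n/2\rfloor}$ forces $|L_T|\ge m+1$ for some $T$; this is exactly your ``extra level''.

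Now take $A$ with $A\cap Y=T$ and $|A\cap X|=\min L_T$, and let $C=A\cup S$ with $S\subseteq X\setminus A$, $|S|=m$. Every set between $A$ and $C$ has trace $T$ and $X$-level in $L_T$. So all $2^m$ sets of $[A,C]$ lie in $\cF$. This full copy of $B_m$ contains a weak copy of $D_{2^m-2}$, hence of $D_s$ for every $s\in I_m$.

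This is the case $j=0$ of your classification, and no weight function can touch it, because $w$ only sees the trace on $Y$. It also kills your plans for $m=3,4$: for $m=3$ such a family is not even $D_6$-free. Dropping the window does not help either. Any family determined by $|F\cap X|$ and $F\cap Y$ with $|Y|$ bounded needs a slice with $m+1$ full levels. Between two extreme ones at distance $d\ge m$ there are then at least $\sum_{i=1}^{m-1}\binom{d}{j_i}\ge 2^m-2$ members.

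Also, the constructions of Tompkins, of Ellis--Ivan--Leader, and of Theorem~\ref{B3D6} are not weighted-level families. They use random labellings $\phi:[n]\to\F_q^k$ with rank conditions, and that is what rules out such full sublattices.

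\textbf{What is missing} is a non-symmetric piece confined to the three middle layers, together with a finite extremal quantity that controls every $m$-dimensional interval. The paper takes Tompkins' strong $D_2$-free family on layers $\lfloor n/2\rfloor-1,\lfloor n/2\rfloor,\lfloor n/2\rfloor+1$ (density $2.14$). It adds $\lfloor (m-2)/2\rfloor$ full layers below and $\lceil (m-2)/2\rceil$ full layers above, giving $m+1$ layers in total.

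Since $s\ge 2^{m-1}-1$, the bottom and top $G\subset G'$ of a weak $D_s$ must satisfy $|G'\setminus G|=m$. The family meets the middle three layers of $[G,G']\cong B_m$ in a $D_2$-free family, so in at most $f(m)$ sets. For $m=3$ these layers contain $G$ itself, which is why $f(3)$ is defined with $\emptyset\in\cF$. Hence the family is weak $D_s$-free for $s>2^m-2-f'(m)$.

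The constant $0.84$ comes from the Balogh--Hu--Lidick\'y--Liu bound $f(m)\le(2.1511+o(1))\binom{m}{\lfloor m/2\rfloor}$, i.e.\ from $3-2.1511>0.84$, not from a Gaussian-tail optimization. The bounds $s_3\le 5$ and $s_4\le 11$ follow from $f(3)=5$ and $f(4)=10$. Your derivation of $s_3\ge 5$ from Griggs--Li--Lu is correct and matches the paper.
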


Note that Theorem \ref{sm} together with the result of Griggs, Li, and Lu imply $2^m-s_m=\Theta(\binom{m}{\lfloor \frac{m}{2}\rfloor})$.

\begin{theorem}\label{smstrong}
    The following inequalities hold: $\Omega(\frac{1}{m}\binom{m}{\lfloor \frac{m}{2}\rfloor})\le \binom{m}{\lfloor \frac{m}{2}\rfloor}-s^*_m\le O(\frac{1}{m^{1/3}}\binom{m}{\lfloor \frac{m}{2}\rfloor})$.
\end{theorem}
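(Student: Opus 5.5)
The two inequalities of Theorem~\ref{smstrong} are proved separately. Write $M=\binom{m}{\lfloor m/2\rfloor}$. The left inequality $\binom{m}{\lfloor m/2\rfloor}-s^*_m=\Omega(M/m)$ means $s^*_m\le M-cM/m$. So I need a \emph{construction}: a strong $D_s$-free family of size $(m+\varepsilon)\binom{n}{\lfloor n/2\rfloor}$ for some $s=M-cM/m$. The right inequality means $s^*_m\ge M-CM/m^{1/3}$. So I need an \emph{upper bound}: $\LA(n,D_s)=(m+o(1))\binom{n}{\lfloor n/2\rfloor}$ for every $s\le M-CM/m^{1/3}$. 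This is done by sharpening the chain-counting argument of \cite{Patkos}.

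\textbf{Construction (left inequality).} I would follow the Tompkins and Ellis--Ivan--Leader template.
\begin{itemize}
\item Start from $m$ consecutive middle layers $L_1,\dots,L_m$.
\item Add a family $\cG$ of positive density $\delta$ from the layer just above $L_m$.
\item Delete a structured family $\cH$ from the interior layers.
\end{itemize}
Without $\cG$, any strong $D_s$ spans at most $m$ levels, and its antichain is smaller than $M$. With $\cG$, a strong copy can use $c\in\cG$ and $a\in L_1$, so that $[a,c]\cong B_m$. The family then restricted to $[a,c]$ has an antichain of size up to $M$.

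To destroy such copies, I would delete the sets $F$ in the central layer(s) whose element sum satisfies $\sum_{x\in F}x\equiv 0 \pmod m$, or a similar congruence class. The point is that in every interval $[a,c]$ with $|c\setminus a|=m$, the middle sets are the $\lfloor m/2\rfloor$-subsets of an $m$-set. Their sums are nearly equidistributed mod $m$, so about $M/m$ of them are lost. A LYM-type weighting inside $[a,c]$ then shows that every antichain of the remaining sets in $[a,c]$ has size at most $M-cM/m$. Sets off the middle layers have LYM weight smaller by a constant factor, so they cannot compensate.

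The cost is the deleted mass, about $\frac1m\binom{n}{\lfloor n/2\rfloor}$. To beat it, the deletion must only be performed below sets of $\cG$. I would therefore choose $\cG$ as a code-like family, as in \cite{Tompkins}: every central set lies below few members of $\cG$, and the congruence condition only needs to hold in intervals $[a,c]$ with $c\in\cG$. This lets the gain $\delta$ exceed the deleted proportion. Balancing these two quantities is the step I expect to be hardest. If the congruence-class deletion is too expensive, I would instead delete randomly with density $\Theta(1/m)$ and fix the few bad intervals by a deletion (alteration) step.

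\textbf{Upper bound (right inequality).} Let $\cF$ be strong $D_s$-free with $s\le M-CM/m^{1/3}$. I would average $|\cF\cap \cC|$ over maximal chains $\cC$, as in \cite{Patkos}, after discarding sets of size far from $n/2$. The key local statement concerns a chain meeting $\cF$ in $m+1$ or more sets, with bottom element $a$ and top element $c$. In that case the family restricted to $[a,c]$ is strong $D_s$-free. It must therefore contain no antichain of size $s$ strictly between $a$ and $c$, apart from comparabilities forced with $a$ and $c$.

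A stability version of Sperner/LYM then bounds the expected number of $\cF$-sets on a random chain through $[a,c]$. Here one uses that an antichain missing $CM/m^{1/3}$ elements of the middle layer of $B_m$ forces LYM deficiency of order $m^{-1/3}$ in $[a,c]$. This deficiency pays for the extra chain elements, which have probability $O(m^{-1/3})$ by a concentration estimate on interval lengths. The exponent $1/3$ comes from optimizing these two error terms against each other, and this accounting is the delicate part of the sharpened calculation.
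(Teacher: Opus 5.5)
You have the two directions right: the left inequality needs a construction, and the right one needs an upper bound on $\LA(n,D_s)$. But the construction half has a genuine gap, and the upper-bound half misidentifies where the saving comes from.

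\textbf{The construction.} Your congruence rule does not spoil every interval. For any labelling, a set $c$ of size about $n/2$ contains $m$ elements $T$ in one residue class mod $m$. For $a=c\setminus T$, all middle sets of $[a,c]$ then have the same element sum mod $m$, so that middle level is deleted entirely or not at all. It is easy to find such intervals with untouched middle level below essentially every $c\in\cG$.

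Random deletion plus alteration does not rescue this, because $m$ is fixed while $n\to\infty$. An interval fails with a fixed positive probability, and each $c$ lies above $\Theta(n^m)$ intervals.

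Your LYM step is also wrong as stated. The levels next to the middle of $B_m$ have size $(1-\frac{2}{m+2})M$, not a constant factor less. So deleting $M/m$ middle sets per interval gives, via LYM, only $M-O(M/m^2)$. To reach $M-\Omega(M/m)$ this way you need a \emph{constant fraction} of the middle level of \emph{every} $m$-dimensional interval to be missing, while keeping positive global density.

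That is exactly what the Ellis--Ivan--Leader daisy-free families provide. If $\cF\subseteq\binom{[n]}{\lfloor n/2\rfloor}$ is $D_{\lfloor n/2\rfloor,t}$-free, every link graph is $K_t$-free. By Tur\'an, $\cF$ then has density at most $1-\frac{1}{t-1}+o(1)$ in the middle level of every $[G,G']$ with $|G'\setminus G|=m$, and this holds deterministically.

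The paper surrounds such families by full layers.
\begin{itemize}
\item For even $m$: $\frac m2$ full layers on each side of a daisy-free middle layer of density $0.29$ (with $t=4$). The local density is below $\frac34$, and LYM bounds antichains by $(1-\frac{1}{4m})M$.
\item For odd $m$: $\frac{m-1}{2}$ full layers on each side of two daisy-free layers of density $\frac12+\varepsilon$ (with $t=t_0$). Here a Lov\'asz--Kruskal--Katona shadow argument is needed for antichains mixing the two middle levels, a case your proposal does not address.
\end{itemize}
For even $m$ your template does contain this construction: take $\cG$ to be the whole top layer and delete the complement of a daisy-free family. The deletion need not be small, only locally spread.

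\textbf{The upper bound.} Partitioning chains by their smallest and largest $\cF$-sets $F\subseteq F'$, and noting that $\cF_{F,F'}$ has no antichain of size $s$, is the right framework. However, the accounting you sketch is not what makes it work, and intervals with $N=|F'|-|F|>m$ are never handled.

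What is needed is that the Lubell mass satisfies $\lambda_N(\cF_{F,F'})\le m$ for every $N$. The paper uses only that each level of the interval has at most $s-1$ sets, so
$\lambda_N(\cF_{F,F'})\le R_s(N)=\sum_j\min\{1,\frac{s-1}{\binom Nj}\}$.
This is at most $N+1\le m$ for $N<m$, and it is non-increasing for $N\ge m$ by \cite{Patkos}. Everything therefore reduces to $N=m$.

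At $N=m$ the excess to be paid is a full unit ($m+1$ levels against the target $m$), not an event of probability $O(m^{-1/3})$. No stability result or concentration of interval lengths enters. Each of the $\lceil m^{1/3}\rceil$ central levels has $\binom mj\ge(1-\frac{1}{2m^{1/3}})M$, so it contributes at most about $1-(C-\frac12)m^{-1/3}$. The total saving exceeds $1$ once $C>\frac32$.

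The exponent $\frac13$ comes from balancing two quantities, writing $s\approx(1-\gamma)M$: the saving $J\gamma$ over $J$ central levels, against the binomial decay $\sum_{|j|\le J/2}2j^2/m=\Theta(J^3/m)$.
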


The remainder of the paper is organized as follows. Section 2 contains the proof of Theorem \ref{B3D6}, while the proofs of Theorem \ref{sm} and Theorem \ref{smstrong} are given in Section 3.

\section{Proof of Theorem \ref{B3D6}}

We start by introducing the machinery of \cite{Tompkins}. Let us write
\[
\Delta_q=\prod_{i=1}^{\infty}(1-q^{-i}).
\]

The Gaussian binomial coefficient counting the number of $r$-dimensional subspaces of $\F_q^m$ is
\[
\genfrac{[}{]}{0pt}{}{m}{r}_q
=\prod_{i=0}^{r-1}\frac{q^m-q^i}{q^r-q^i}.
\]

\begin{proposition}[Proposition 3 in \cite{Tompkins}]\label{propRankcount}
Let $q$ be a prime power and let $0\le r\le \min\{m,s\}$.  The number of $m\times s$ matrices over $\F_q$ having rank $r$ is
\[
{m \brack r}_q\prod_{i=0}^{r-1}(q^s-q^i).
\]
\end{proposition}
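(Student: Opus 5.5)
We prove the count by building the matrix one column at a time and grouping matrices according to their column space. An $m\times s$ matrix $M$ over $\F_q$ has rank $r$ exactly when its column space $W=\mathrm{col}(M)\le \F_q^m$ has dimension $r$. We therefore write the desired number as
\[
\sum_{W\le \F_q^m,\ \dim W=r} N(W),
\]
where $N(W)$ is the number of $m\times s$ matrices whose columns span exactly $W$. The quantity $N(W)$ depends only on $r=\dim W$: choosing a basis of $W$ identifies such matrices with the $r\times s$ matrices of full row rank $r$. The number of subspaces $W$ of dimension $r$ is ${m\brack r}_q$ by the definition recalled above. It therefore remains to show that the number of $r\times s$ matrices of rank $r$ over $\F_q$ is $\prod_{i=0}^{r-1}(q^s-q^i)$.

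For this we transpose. An $r\times s$ matrix has rank $r$ if and only if its $r$ rows, which are vectors in $\F_q^s$, are linearly independent. We count ordered linearly independent $r$-tuples in $\F_q^s$ greedily.
\begin{itemize}
\item The first row can be any nonzero vector, giving $q^s-1$ choices.
\item Once $i$ independent rows are fixed, they span a subspace of size $q^i$, and the $(i+1)$-st row must avoid it, giving $q^s-q^i$ choices.
\end{itemize}
Multiplying over $i=0,\dots,r-1$ yields $\prod_{i=0}^{r-1}(q^s-q^i)$. This requires $r\le s$, which is part of the hypothesis.

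The one point needing care is the bijection between matrices $M$ with $\mathrm{col}(M)=W$ and full-row-rank $r\times s$ matrices. Fix an $m\times r$ matrix $B$ whose columns form a basis of $W$. Each $M$ with column space contained in $W$ factors uniquely as $M=BC$ with $C$ an $r\times s$ matrix, because $B$ has trivial kernel. Moreover, $\mathrm{col}(M)=W$ holds exactly when $\mathrm{rank}(C)=r$, since $\mathrm{rank}(BC)=\mathrm{rank}(C)$ for injective $B$. So $N(W)$ equals the number of rank-$r$ matrices $C$.

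Combining the two counts gives
\[
{m \brack r}_q\prod_{i=0}^{r-1}(q^s-q^i).
\]
In the degenerate case $r=0$ both factors are $1$, matching the single zero matrix. I expect no real obstacle; the only step to state carefully is the uniqueness of the factorization $M=BC$.
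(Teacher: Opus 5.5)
Your proof is correct and complete. The paper itself gives no proof of this proposition (it is quoted from Tompkins and then used only in Corollary~\ref{cor}), so there is no in-paper argument to compare against.

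What you wrote is the standard argument:
\begin{itemize}
\item group the rank-$r$ matrices by their column space $W$, which produces the Gaussian binomial factor;
\item fix an $m\times r$ basis matrix $B$ of $W$, so that $M=BC$ with $C$ unique because $B$ is injective, and $\mathrm{col}(M)=W$ exactly when $\mathrm{rank}(C)=r$;
\item count full-row-rank $r\times s$ matrices as ordered linearly independent $r$-tuples in $\F_q^s$, which gives $\prod_{i=0}^{r-1}(q^s-q^i)$.
\end{itemize}
You also correctly check the degenerate case $r=0$.

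One cosmetic point: your opening sentence says you build the matrix one column at a time. The argument actually fixes the column space and then chooses the rows of $C$ greedily, so that sentence should be adjusted; the proof itself is unaffected.
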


\begin{cor}\label{cor}
    Let $q$ be a prime power and let $0\le r\le \min\{m,s\}$. Suppose $r=m-a$, $s=m+b$ with $a,b$ fixed and $m$ tends to infinity.  The probability that an $m\times s$ matrix over $\F_q$ with uniform independent entries has rank $r$ tends to
    \[
    \left(\frac{1}{q^{ab+\binom{a}{2}}}\prod_{i=1}^a\frac{1}{q^i-1}\prod_{j=1}^{b+a}\frac{q^j}{q^j-1}\right)\cdot \Delta_q.
    \]
\end{cor}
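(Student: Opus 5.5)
The plan is to divide the count of Proposition \ref{propRankcount} by the total number $q^{ms}$ of $m\times s$ matrices and take the limit factor by factor. With $r=m-a$ and $s=m+b$, the probability in question is
\[
\frac{1}{q^{ms}}{m \brack r}_q\prod_{i=0}^{r-1}(q^s-q^i).
\]
The idea is to rewrite every factor in the form $q^{(\text{exponent})}\cdot(\text{product of terms }(1-q^{-j}))$. The powers of $q$ should collect into $q^{-ab-\binom a2}$ times a finite correction. The products of $(1-q^{-j})$ should converge, as $m\to\infty$, to $\Delta_q$ or to finite truncations of it.

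\emph{Step 1: the product $\prod_{i=0}^{r-1}(q^s-q^i)$.} Factor out $q^s$ from each term. This gives
\[
q^{rs}\prod_{i=0}^{r-1}\bigl(1-q^{i-s}\bigr)=q^{rs}\prod_{j=s-r+1}^{s}(1-q^{-j}).
\]
Since $s-r=a+b$ is fixed and $s\to\infty$, this product tends to $\Delta_q/\prod_{j=1}^{a+b}(1-q^{-j})$.

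\emph{Step 2: the Gaussian binomial.} Use the symmetry ${m\brack r}_q={m\brack a}_q$ and write
\[
{m\brack a}_q=\prod_{i=0}^{a-1}\frac{q^m-q^i}{q^a-q^i}.
\]
For the numerator, $\prod_{i=0}^{a-1}(q^m-q^i)=q^{am}(1+o(1))$ because $a$ is fixed. For the denominator,
\[
\prod_{i=0}^{a-1}(q^a-q^i)=q^{\binom a2}\prod_{i=1}^{a}(q^i-1).
\]
Hence ${m\brack a}_q\sim q^{am-\binom a2}\prod_{i=1}^a(q^i-1)^{-1}$.

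\emph{Step 3: combine the exponents.} The total power of $q$ is
\[
am-\binom a2+rs-ms=am-\binom a2-a(m+b)=-ab-\binom a2,
\]
which is exactly the prefactor $q^{-ab-\binom a2}$. The remaining factor $1/\prod_{j=1}^{a+b}(1-q^{-j})$ equals $\prod_{j=1}^{a+b}\frac{q^j}{q^j-1}$, and the limit from Step 1 supplies $\Delta_q$. Together these give the stated expression.

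The only point needing care is the exponent bookkeeping in Step 3, in particular the sign of $\binom a2$ and the shift of indices in Step 1. There is no real analytic obstacle: the limit of the tail product $\prod_{j>s-r}(1-q^{-j})$ follows from the absolute convergence of $\sum_j q^{-j}$.
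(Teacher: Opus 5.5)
Your proposal is correct and follows the paper's proof: divide the count from Proposition~\ref{propRankcount} by $q^{ms}$, use ${m\brack r}_q={m\brack a}_q$, factor $q^s$ out of each term of $\prod_{i=0}^{r-1}(q^s-q^i)$ to get the tail $\prod_{j=a+b+1}^{s}(1-q^{-j})\to\Delta_q\prod_{j=1}^{a+b}\frac{q^j}{q^j-1}$, and collect the remaining powers of $q$ into $q^{-ab-\binom a2}$. Your bookkeeping checks out, including the case $b<0$ with $a+b\ge 0$, which the paper actually uses with $a=1$, $b=-1$.
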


\begin{proof}
    By Proposition \ref{propRankcount},  the probability is 
    \[
    \frac{1}{q^{ms}}{m\brack r}_q\prod_{i=0}^{r-1}(q^s-q^i)=\frac{1}{q^{as}}{m\brack a}_q\prod_{i=0}^{r-1}(1-q^{i-s})=\prod_{h=0}^{a-1}\frac{q^{m-h}-1}{q^{m+b}}\prod_{j=1}^a\frac{1}{q^j-1}\prod_{\ell=a+b+1}^{s}(1-q^{-\ell}).
    \]
    As $r,m,s$ tend to infinity, the first product tends to $\frac{1}{q^{ab+\binom{a}{2}}}$, the third product tends to $(\prod_{j=1}^{b+a}\frac{q^j}{q^j-1})\cdot \Delta_q$.
\end{proof}

Let $k=\lfloor n/2\rfloor,
V=\F_q^k$,
and let $\phi\colon[n]\to V$ be a labelling of the ground set.  For $A\subseteq[n]$, let
\[
W(A)=W_\phi(A)=\langle \{\phi(a) : a\in A\}\rangle.
\] 

Define
\[
\mathcal \cF_{\phi,1}=\{A\in \binom{[n]}{k-1} : \dim W(A)=k-1\},
\]
\[
\mathcal \cF_{\phi,2}=\{A\in \binom{[n]}{k} : \dim W(A)\le k-1\},
\]
\[
\mathcal \cF_{\phi,3}=\binom{[n]}{k+1},
\]
and
\[
\cF_{\phi,4}=\{A\in \binom{[n]}{k+2} : \dim W(A)\ne k-1\}.
\]
Finally, set
\[
\mathcal F_\phi=\bigcup_{i=1}^4\cF_{\phi,i}.
\]

\begin{proposition}
    For any $\phi$, the family $\cF_\phi$ is weak $D_6$-free.
\end{proposition}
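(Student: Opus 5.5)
The plan is a direct case analysis. It uses only the layer structure of $\cF_\phi$ and the observation that $\dim W(\cdot)$ is monotone under inclusion. Suppose, for a contradiction, that $\cF_\phi$ contains a weak copy of $D_6$. Then there are distinct sets $A,B_1,\dots,B_6,C\in\cF_\phi$ with $A\subseteq B_i\subseteq C$ for all $i$. Distinctness forces $A\subsetneq B_i\subsetneq C$. So the six sets $B_i$ are distinct elements of the open interval $\{B: A\subsetneq B\subsetneq C\}$, which has $2^{|C\setminus A|}-2$ elements. Hence $2^{|C\setminus A|}-2\ge 6$, that is, $|C\setminus A|\ge 3$.

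Next I use the layer structure. Every member of $\cF_\phi$ lies in layers $k-1,\dots,k+2$, so $|C\setminus A|\le 3$. Combined with the previous step, $|C\setminus A|=3$, which forces $|A|=k-1$ and $|C|=k+2$. In particular $A\in\cF_{\phi,1}$, so $\dim W(A)=k-1$, and $C\in\cF_{\phi,4}$, so $\dim W(C)\ne k-1$. Moreover, the open interval between $A$ and $C$ now has exactly $2^3-2=6$ elements, so $\{B_1,\dots,B_6\}$ is the whole interval. In particular, the three $k$-sets $A\cup\{x\}$, $x\in C\setminus A$, all belong to $\cF_\phi$, hence to $\cF_{\phi,2}$.

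The key step is the following dimension argument. For each $x\in C\setminus A$, monotonicity gives $W(A)\subseteq W(A\cup\{x\})$. We also have $\dim W(A\cup\{x\})\le k-1=\dim W(A)$. Therefore $W(A\cup\{x\})=W(A)$, and so $\phi(x)\in W(A)$. Since this holds for all three elements of $C\setminus A$, we get $W(C)=W(A)$, so $\dim W(C)=k-1$. This contradicts $C\in\cF_{\phi,4}$.

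I do not expect a real obstacle. The only point needing care is the counting step. A weak copy only requires an order-preserving bijection, so the $B_i$ may be comparable with each other, and one must not assume they lie in a single layer. The argument above avoids this by counting all sets strictly between $A$ and $C$, using only that the seven sets are distinct. The layer restriction then pins down $|A|=k-1$ and $|C|=k+2$.
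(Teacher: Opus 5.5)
Your proof is correct and is essentially the paper's argument: the paper likewise reduces to a full interval $[A,A\cup\{x,y,z\}]$ with $|A|=k-1$. It then uses the definitions of $\cF_{\phi,1}$ and $\cF_{\phi,2}$ to get $W(A)=W(A\cup\{x\})=W(A\cup\{y\})=W(A\cup\{z\})=W(A\cup\{x,y,z\})$, contradicting $\dim W(A\cup\{x,y,z\})\ne k-1$. The only difference is that you spell out the counting step ($2^{|C\setminus A|}-2\ge 6$ from distinctness, combined with the four-layer restriction), which the paper asserts without justification.
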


\begin{proof}
    Suppose otherwise. Then there exist $A\in \binom{[n]}{k-1}$, $x,y,z\notin A$ such that  all eight sets $C$ with $A\subseteq C\subseteq B=A\cup \{x,y,z\}$ belong to $\cF$. But then by definition of $\cF_{\phi,1}$ and $\cF_{\phi,2}$, we must have $W(A)=W(A\cup \{x\})=W(A\cup \{y\})=W(A\cup \{z\})$, and thus $W(A)=W(B)$ contradicting the conditions $\dim W(A)=k-1$, $\dim W(B)\neq k-1$.
\end{proof}

\begin{proof}[Proof of Theorem \ref{B3D6}]
    We use a random labeling $\phi$.
    
    Applying Corollary \ref{cor} with $m=k$, $r=k-1$, $s=k-1$ (so $a=1,b=-1$), we obtain that the probability that a $(k-1)$-set belongs to $\cF_{\phi,1}$ is $(\frac{q}{q-1}+o(1))\Delta_q$.

    Applying Corollary \ref{cor} with $m=k$, $r=k$, $s=k$ (so $a=0,b=0$), we obtain that the probability that a $k$-set does \textit{not} belong to $\cF_{\phi,2}$ is $(1+o(1))\Delta_q$.

    Applying Corollary \ref{cor} with $m=k$, $r=k-1$, $s=k+2$ (so $a=1,b=2$), we obtain that the probability that a $(k+2)$-set does \textit{not} belong to $\cF_{\phi,4}$ is $(\frac{q^4}{(q^3-1)(q^2-1)(q-1)^2}+o(1))\Delta_q$.

    By linearity of expectation, and the fact $\binom{n}{k}=(1+o(1))\binom{n}{k+j}$ for $j=-1,1,2$, we obtain that the expected size of $\cF_\phi$ has asymptotics
    \[
    \left[\frac{q}{q-1}\Delta_q +(1-\Delta_q)+1+(1-\frac{q^4}{(q^3-1)(q^2-1)(q-1)^2}\Delta_q)\right]\binom{n}{k}.
    \]
    As
    \[
    \frac{q}{q-1}-1-\frac{q^4}{(q^3-1)(q^2-1)(q-1)^2}=\frac{(q-1)(q^2-1)(q^3-1)-q^4}{(q^2-1)(q-1)^2(q^3-1)}=:f(q)
    \]
    is positive for any $q$, we obtain that there exists a $D_6$-free family of size $(3+f(q)\Delta_q)\binom{n}{\lfloor n/2\rfloor}$. Picking $q=3$ yields $\varepsilon=f(3)\Delta_3=\frac{335}{832}\Delta_3>0.22$.
\end{proof}

\section{Proofs of Theorems \ref{sm} and \ref{smstrong}}

Our approach will be similar for strong and weak $D_s$-free families. Our constructions will contain constructions of Tompkins or of Ellis-Ivan-Leader on a fixed number of middle layers surrounded by full layers so that the total number of layers used will be $m^*_s+1$ or $m_s+1$.

First, we consider the case of weak $D_s$-free families. For $m\ge 3$, let $f(m)$ denote the maximum size of a $D_2$-free family $\cF\subseteq \binom{m}{\lfloor \frac{m}{2}\rfloor-1}\cup \binom{m}{\lfloor \frac{m}{2}\rfloor}\cup \binom{m}{\lfloor \frac{m}{2}\rfloor+1}$ with the extra assumption that if $m=3$, then $\emptyset \in \cF$. Note that as $\cF$ lives only on 3 layers, all copies of $D_2$ in $\cF$ are strong copies. The following lemma together with a result of Balogh, Hu, Lidicky, and Liu \cite{BaloghHuLidickyLiu} will immediately imply Theorem \ref{sm}.

\begin{lemma}\label{extendT}
    Let $f'(m)=\sum_{i=0}^2\binom{m}{\lfloor \frac{m}{2}\rfloor-1+i}-f(m)$, the minimum number of sets in a family $\cG\subseteq \binom{m}{\lfloor \frac{m}{2}\rfloor-1}\cup \binom{m}{\lfloor \frac{m}{2}\rfloor}\cup \binom{m}{\lfloor \frac{m}{2}\rfloor+1}$ such that $\cG$ meets every copy of $D_2$ in the middle three layers. Then for any $s\in I_m$ with $2^m-2-f'(m)<s$, we have $\LAw(n,D_s)\ge (m+0.14+o(1))\binom{n}{\lfloor \frac{n}{2}\rfloor}$.
\end{lemma}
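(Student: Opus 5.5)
Since the statement involves a construction, I will build a weak $D_s$-free family on $m+1$ consecutive middle layers of $2^{[n]}$. The top and bottom $m-2$ of these layers are taken in full, while the three "middle" layers form a Tompkins-type strong $B_2$-free family of size $(2+0.14+o(1))\binom{n}{\lfloor n/2\rfloor}$ from Theorem \ref{tomp}, suitably placed.

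Concretely, I index the layers $\ell_0<\ell_1<\dots<\ell_m$ with $\ell_i=\lfloor n/2\rfloor-\lfloor m/2\rfloor+i$, shifted so that the three special layers $\ell_{j-1},\ell_j,\ell_{j+1}$ correspond to the positions $\lfloor m/2\rfloor-1,\lfloor m/2\rfloor,\lfloor m/2\rfloor+1$ inside an interval of length $m$ (for $m=3$, position $0$ is the empty-set layer). On these three layers I put the Tompkins family $\cT$. It lives on three consecutive layers, so being strong $B_2$-free means it contains no $D_2$ at all. The other $m-2$ layers are taken completely. The size is then $(m-2+2.14+o(1))\binom{n}{\lfloor n/2\rfloor}=(m+0.14+o(1))\binom{n}{\lfloor n/2\rfloor}$, as required.

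Next comes freeness. Suppose there is a weak copy of $D_s$ with bottom $A$ and top $C$. All sets lie in $m+1$ consecutive layers, so $|C\setminus A|\le m$. The $s$ middle elements are distinct sets strictly between $A$ and $C$ in $\cF$. If $|C\setminus A|\le m-1$, the interval contains at most $2^{m-1}-2<s$ interior sets (using $s\ge 2^{m-1}-1$), a contradiction. So $C\setminus A$ has exactly $m$ elements, $A$ is in layer $\ell_0$ and $C$ in layer $\ell_m$. The interval $[A,C]$ is then a copy of $B_m$, and its interior sets in $\cF$ correspond to a subfamily of $2^{[m]}\setminus\{\emptyset,[m]\}$ lying in $\cF$.

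The key step is to show that, restricted to this copy of $B_m$, the three special layers of $\cF$ form a family with no $D_2$. This holds because $\cT$ is $D_2$-free in $2^{[n]}$, and that property is inherited by every sub-interval. Identifying the interval with $2^{[m]}$, the restriction is a $D_2$-free family $\cF'$ in layers $\lfloor m/2\rfloor-1,\lfloor m/2\rfloor,\lfloor m/2\rfloor+1$. For $m=3$ the empty set corresponds to $A\in\cF$, which matches the extra assumption in the definition of $f(3)$; I will check this placement carefully. Its complement within the three layers is a family $\cG$ meeting every $D_2$, so $|\cG|\ge f'(m)$. Hence the number of interior sets of $[A,C]$ lying in $\cF$ is at most $2^m-2-f'(m)<s$, which contradicts the existence of the copy.

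The main obstacle is the bookkeeping in this last step. For general $m$ the bound needs the missing sets to lie inside the interior. For $m=3$ one of the special layers is the bottom layer itself, so "interior" must be adjusted, and that is presumably why the definition of $f(3)$ forces $\emptyset\in\cF$. I will also need to confirm that the middle layers of an interval between layers $\ell_0$ and $\ell_m$ align exactly with the special layers of $2^{[n]}$, which is how I chose the shift.
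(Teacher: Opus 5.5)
Your proposal is correct and follows the paper's proof essentially verbatim. You place Tompkins' family on the middle three layers of $2^{[n]}$, pad it with $\lfloor\frac{m-2}{2}\rfloor$ full layers below and $\lceil\frac{m-2}{2}\rceil$ above, force $|C\setminus A|=m$ via $s\ge 2^{m-1}-1$, and then count interior sets via $f'(m)$. The $m=3$ bookkeeping you flag resolves at once, a point the paper leaves implicit: $A\in\cF$ gives $\emptyset\in\cF'$, so the complement $\cG$ avoids $\emptyset$ and lies entirely in the interior, which yields the bound $2^3-2-f'(3)=4$.
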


\begin{proof}
    Consider the strong $D_2$-free construction $\cF_T\subseteq \binom{n}{\lfloor \frac{n}{2}\rfloor-1}\cup \binom{n}{\lfloor \frac{n}{2}\rfloor}\cup \binom{n}{\lfloor \frac{n}{2}\rfloor+1}$ of Tompkins from \cite{Tompkins}. It has size $(2.14+o(1))\binom{n}{\lfloor \frac{n}{2}\rfloor}$. Let $\cG=\cG^-\cup \cF_T\cup \cG^+$, where $\cG^-$ consists of the $\lfloor \frac{m-2}{2}\rfloor$ layers below $\cF_T$ and $\cG^+$ consists of the $\lceil \frac{m-2}{2}\rceil$ layers above $\cF_T$. Clearly, $|\cG|=(m+0.14+o(1))\binom{n}{\lfloor \frac{n}{2}\rfloor}$. We claim that $\cG$ is weak $D_s$-free. As $\cG$ lives on $m+1=m_s+1$ layers, the bottom and top elements $G,G'$ of a copy of $D_s$ have to satisfy $|G'|-|G|=m$. Writing $[G,G']=\{H:G\subseteq H\subseteq G'\}$, as $\cF_T$ is $D_2$-free, $\cG$ can contain at most $f(m)$ sets from the middle three layers of $[G,G']$ and therefore at most $2^m-2-f'(m)$ sets from $[G,G']\setminus \{G,G'\}$.
\end{proof}

\begin{proof}[Proof of Theorem \ref{sm}]
    According to a result of Balogh, Hu, Lidicky, and Liu \cite{BaloghHuLidickyLiu} $f(m)\le (2.1511...+o(1))\binom{m}{\lfloor \frac{m}{2}\rfloor}$, so $f'(m)\ge (0.84+o(1))\binom{m}{\lfloor \frac{m}{2}\rfloor}$. This and Lemma \ref{extendT} imply the first statement of the theorem.

    Next we claim $f(3)=5$. If $\emptyset \in \cF\subseteq 2^{[3]}\setminus \{[3]\}$ is $D_2$-free, then $\cF$ cannot contain all singletons with any pair and similarly $\cF$ cannot contain all pairs and two singletons. This implies $|\cF|\le 5$ and the family $\{\emptyset, \{1\},\{2\},\{1,3\},\{2,3\}\}$ is $D_2$-free and thus shows $f(3)=5$. Then $f'(3)=2$ and so $2^3-2-f'(3)=4$, therefore by Lemma \ref{extendT} we obtain $s_3\le 5$. The result of Griggs, Li, and Lu \cite{GriggsLiLu} implies $4=2^3-1-\binom{3}{2}<s_3$.

    The statement $s_4\le 11$ follows from $f(4)=10$. The lower bound is given by the family $\binom{[4]}{1}\cup \binom{[4]}{2}$, the upper bound is a case analysis that we leave to the reader.
\end{proof}

\medskip

The complement of the family $\cF_T$ defined in \cite{Tompkins} contains sets of sizes $\lfloor \frac{n}{2}\rfloor-1,\lfloor \frac{n}{2}\rfloor,\lfloor \frac{n}{2}\rfloor+1$ and so although $\cF_T$ misses all copies of $D_2$, $\cF_T\cap [G,G']$ might contain all sets of size $\lfloor \frac{n}{2}\rfloor$ in $[G,G']$ for some $G\subset G'$ with $|G|=\lfloor \frac{n-m}{2}\rfloor, |G'|=\lceil\frac{n+m}{2}\rceil$. Therefore, the families that we used to prove Theorem \ref{sm} will not be helpful in proving Theorem \ref{smstrong}. To overcome this issue, we will use families defined in \cite{EllisIvanLeader}. We need to introduce their terminology first. An \textit{$(r,t)$-daisy} is a family $D_{r,t}$ of $\binom{t}{2}$ $r$-sets of the form $\{X\cup S: S\in \binom{T}{2}\}$ where $|X|=r-2$, $|T|=t$ and $X\cap T=\emptyset$. The Tur\'an number $\ex_r(n,D_{r,t})$ is the most number of $r$-sets in a $D_{r,t}$-free $r$-uniform hypergraph on $n$ vertices. As for all uniform Tur\'an numbers, the limit $\pi(D_{r,t})=\lim_n\frac{\ex_r(n,D_{r,t})}{\binom{n}{r}}$ exists. Bollob\'as, Leader, and Malvenuto \cite{BLM} conjectured $\pi(D_{r,t})$ tends to 0 for any fixed $t$ and $r$ tending to infinity. Ellis, Ivan, and Leader disproved this conjecture for any $t\ge 4$ and showed that $\lim_r\pi(D_{r,t})\ge 1-\frac{1}{t}-o(\frac{1}{t})$. An easy double counting argument (see \cite{EllisIvanLeader}) shows that $\frac{\ex_r(n,D_{r,t})}{\binom{n}{r}}\le 1-\frac{1}{t-1}-O(\frac{1}{n})$. Also, in \cite{BLM}, it is shown that $$\lim_{r\rightarrow \infty}\pi(D_{r,t})=\lim_{n\rightarrow \infty}\frac{\ex_{\lfloor \frac{n}{2}\rfloor}(n,D_{\lfloor \frac{n}{2}\rfloor,t})}{\binom{n}{\lfloor \frac{n}{2}\rfloor}}=\lim_{n\rightarrow \infty}\frac{\ex_{\lfloor \frac{n}{2}\rfloor+1}(n,D_{\lfloor \frac{n}{2}\rfloor+1,t})}{\binom{n}{\lfloor \frac{n}{2}\rfloor+1}}.$$ 
Based on the above, we are ready to define our $D_s$-free families. Let $t_0$ be the smallest $t$ such that $\lim_r\pi(D_{r,t})>1/2$. By results of \cite{EllisIvanLeader} there exist families
\begin{itemize}
    \item 
    $\cF_{n,4}\subseteq \binom{[n]}{\lfloor \frac{n}{2} \rfloor}$ $D_{\lfloor \frac{n}{2} \rfloor,4}$-free and $|\cF_{n,4}|=(0.29+o(1))\binom{n}{\lfloor \frac{n}{2} \rfloor}$,
    \item 
    $\cF_{n,t_0,-}\subseteq \binom{[n]}{\lfloor \frac{n}{2} \rfloor},\cF_{n,t_0,+}\subseteq \binom{[n]}{\lfloor\frac{n}{2} \rfloor+1}$ both $D_{\lfloor \frac{n}{2} \rfloor,t_0}$-free and of size $(0.5+\varepsilon+o(1))\binom{n}{\lfloor \frac{n}{2} \rfloor}$ for some fixed positive $\varepsilon$.
\end{itemize}

Then we define our final families as follows:
\begin{itemize}
    \item 
    If $m$ is even, let $\cG_{n,m}=\cF_{n,4}\cup \bigcup_{i=1}^{m/2}(\binom{[n]}{\lfloor \frac{n}{2}\rfloor -i}\cup \binom{[n]}{\lfloor \frac{n}{2}\rfloor +i})$. $|\cG_{n,m}|=(m+0.29+o(1))\binom{n}{\lfloor \frac{n}{2}\rfloor}$.
    \item 
    If $m$ is odd, let $\cG_{n,m}=\cF_{n,t_0,-}\cup \cF_{n,t_0,+}\cup \bigcup_{i=1}^{\frac{m-1}{2}}(\binom{[n]}{\lfloor \frac{n}{2}\rfloor -i}\cup \binom{[n]}{\lfloor \frac{n}{2}\rfloor +i+1})$. $|\cG_{n,m}|=(m+2\varepsilon+o(1))\binom{n}{\lfloor \frac{n}{2}\rfloor}$.
\end{itemize}
The proof of the upper bound in Theorem \ref{smstrong} will simply show that for what values $s\in I^*_m$ $\cG_{n,m}$ is strong $D_s$-free.

\begin{proof}[Proof of the upper bound in Theorem \ref{smstrong}]
    As $\cG_{n,m}$ lives on the middle $m+1$ layers, if $s>\binom{m-1}{\lfloor \frac{m-1}{2}\rfloor}$, then the top and bottom elements $G,G'$ of a strong copy of $D_s$ should come from the lowest and highest layer of $\cG_{n,m}$ as by Sperner's theorem \cite{S}, the largest antichain in $[G,G']$ has size at most $\binom{|G'|-|G|}{\lfloor \frac{|G'|-|G|}{2}\rfloor}$. Suppose $m$ is large enough. We utilize the upper bound  $\frac{\ex_r(m,D_{r,t})}{\binom{m}{r}}\le 1-\frac{1}{t-1}-O(\frac{1}{m})$
    \begin{itemize}
        \item 
        If $m$ is even, then as $\cF_{n,4}$ is $D_{\lfloor \frac{n}{2}\rfloor,4}$-free, $|\cF_{n,4}\cap [G,G']|\le (2/3+o(1))\binom{m}{ \frac{m}{2}}<\frac{3}{4}\binom{m}{ \frac{m}{2}}$. 
        \item 
        If $m$ is odd, then as  $\cF_{n,t_0,-},\cF_{n,t_0,+}$ are $D_{\lfloor \frac{n}{2}\rfloor,t_0}$-free, $|\cF_{n,t_0,-}\cap [G,G']|,|\cF_{n,t_0,+}\cap [G,G']|\le (1-\frac{1}{t_0-1}+o(1))\binom{m}{\lfloor \frac{m}{2}\rfloor}<(1-\frac{1}{t_0})\binom{m}{\lfloor \frac{m}{2}\rfloor}$. 
    \end{itemize}
    Therefore
    \begin{itemize}
        \item 
        If $m$ is even, then $\cG_{n,m}$ is strong $D_s$-free if $s>g(m)$, where $g(m)$ is the size of the largest antichain in $2^{[m]}$ that contains at most $\frac{3}{4}\binom{m}{ \frac{m}{2}}$ sets of size $\frac{m}{2}$.
        \item 
        If $m$ is odd, then $\cG_{n,m}$ is strong $D_s$-free if $s>h(m)$, where $h(m)$ is the size of the largest antichain in $2^{[m]}$ that contains at most $(1-\frac{1}{t_0})\binom{m}{ \frac{m}{2}}$ sets of size $\lfloor\frac{m}{2}\rfloor$ and $\lceil\frac{m}{2}\rceil$ each.
    \end{itemize}
So $g(m)+1$ and $h(m)+1$ are upper bounds on $s^*_m$. To prove Theorem \ref{smstrong}, we are left to give lower bounds on $\binom{m}{\frac{m}{2}}-g(m)$ and $\binom{m}{\lfloor \frac{m}{2}\rfloor}-h(m)$.

First, the LYM-inequality \cite{Lub,Mesh,Yam} states that any antichain $\cA\subseteq 2^{[m]}$ satisfies
\begin{equation}\label{lym}
    \sum_{A\in \cA}\binom{m}{|A|}^{-1}\le 1.
\end{equation}
If $m$ is even, then for any $i\neq \frac{m}{2}$, we have $\frac{\binom{m}{i}^{-1}}{\binom{m}{\frac{m}{2}}^{-1}}\ge 1+\frac{2}{m}$. So, if $\cA$ contains at most $\frac{3}{4}\binom{m}{\frac{m}{2}}$ sets of size $\frac{m}{2}$, then (\ref{lym}) implies $|\cA|\le \binom{m}{\frac{m}{2}}(\frac{3}{4}+\frac{1}{4(1+\frac{2}{m})})\le \binom{m}{\frac{m}{2}}(1-\frac{1}{4m})$ and so $\binom{m}{\frac{m}{2}}-g(m)\ge \frac{1}{4m}\binom{m}{\frac{m}{2}}$.

If $m$ is odd and $\cA\subseteq 2^{[m]}$ is antichain with $|\cA\cap \binom{[m]}{\lfloor\frac{m}{2}\rfloor}|,|\cA\cap \binom{[m]}{\lceil\frac{m}{2}\rceil}|\le (1-\frac{1}{t_0})\binom{m}{\lfloor\frac{m}{2}\rfloor}$, then we will upper bound $|\cA|$ by $(1-\Omega(\frac{1}{m}))\binom{m}{\lfloor\frac{m}{2}\rfloor}$. We will apply the Lov\'asz version \cite{Lov} of the Kruskal-Katona shadow theorem \cite{Kat,Kru}. It states that if a $k$-uniform family $\cH$ has size $\binom{x}{k}:=\prod_{i=0}^{k-1}\frac{x-i}{k-i}$ for some real $x\ge k$, then $\partial_\ell(\cH)=\{S:\exists H\in \cH, S\subseteq H, |S|=\ell\}$ has size at least $\binom{x}{\ell}$ for any $\ell \le k-1$. 

Let $\cA=\cA^-\cup\cA^+$ with $\cA^-=\{A\in \cA:|A|<\frac{m}{2}\}, \cA^+=\{A\in\cA: |A|>\frac{m}{2}\}$. As by Sperner's theorem $|\cA|\le \binom{m}{\lfloor\frac{m}{2}\rfloor}$, one of $\cA^-,\cA^+$ has size at most $\frac{1}{2}\binom{m}{\lfloor\frac{m}{2}\rfloor}$. Without loss of generality we can assume $|\cA^+|\le \frac{1}{2}\binom{m}{\lfloor\frac{m}{2}\rfloor}$. If $|\cA^+|\le \frac{1}{2t_0}\binom{m}{\lfloor\frac{m}{2}\rfloor}$, then either $|\cA|\le (1-\frac{1}{4t_0})\binom{m}{\lfloor\frac{m}{2}\rfloor}$ and we are done, or $\cA^{--}=\{A\in \cA:|A|<\frac{m}{2}-1\}$ has size at least $\frac{1}{4t_0}\binom{m}{\lfloor\frac{m}{2}\rfloor}$. In this case, an argument identical to that in the case of even $m$  using (\ref{lym}) shows that $|\cA|\le (1-\frac{1}{4t_0m})\binom{m}{\lfloor\frac{m}{2}\rfloor}$.

So we can assume $\frac{1}{2t_0}\binom{m}{\lfloor\frac{m}{2}\rfloor}\le |\cA^+|\le \frac{1}{2}\binom{m}{\lfloor\frac{m}{2}\rfloor}$. As $\cA$ is an antichain, $\cA^-$ and $\partial_{\lfloor \frac{m}{2}\rfloor}(\cA^+)$ are disjoint and $\cA^-\cup\partial_{\lfloor \frac{m}{2}\rfloor}(\cA^+)$ is an antichain and thus has size at most $\binom{m}{\lfloor \frac{m}{2}\rfloor}$. This implies $$|\cA|\le \binom{m}{\lfloor \frac{m}{2}\rfloor}-(|\partial_{\lfloor \frac{m}{2}\rfloor}(\cA^+)|-|\cA^+|).$$
Sperner's original argument implies that there exists $\cA^*$ with $|\cA^*|=|\cA^+|$ and $\cA^*\subseteq \partial_{\lceil \frac{m}{2}\rceil}(\cA^+)$. Thus $\partial_{\lfloor \frac{m}{2}\rfloor}(\cA^*)\subseteq \partial_{\lfloor \frac{m}{2}\rfloor}(\cA^+)$. Also, by the assumption $\frac{1}{2t_0}\binom{m}{\lfloor\frac{m}{2}\rfloor}\le |\cA^+|\le \frac{1}{2}\binom{m}{\lfloor\frac{m}{2}\rfloor}$, for the real $x$ with $|\cA^+|=|\cA^*|=\binom{x}{\lceil \frac{m}{2}\rceil}$ we have $m-c\le x\le m-1$, where the constant $c$ depends only on $t_0$. The ratio $\frac{\binom{x}{\lfloor \frac{m}{2}\rfloor}}{\binom{x}{\lceil \frac{m}{2}\rceil}}$ is smallest when $x=m-1$ and its minimum value is $1+\frac{2}{m}+o(\frac{1}{m})$. So by the above result of Lov\'asz, we have $|\partial_{\lfloor \frac{m}{2}\rfloor}(\cA^+)|-|\cA^+|\ge (\frac{2}{m}+o(\frac{1}{m}))|\cA^+|\ge (\frac{1}{2t_0m}+o(\frac{1}{m}))\binom{m}{\lfloor \frac{m}{2}\rfloor}$.

In all cases, we obtained $|\cA|\le (1-\Omega(\frac{1}{m}))\binom{m}{\lfloor \frac{m}{2}\rfloor}$ showing $h(m)\le  (1-\Omega(\frac{1}{m}))\binom{m}{\lfloor \frac{m}{2}\rfloor}$. This finishes the proof.
\end{proof}

Note that $\binom{[m]}{\lfloor \frac{m}{2}\rfloor-1}$ shows that $g(m),h(m)\ge \binom{m}{\lfloor \frac{m}{2}\rfloor-1}\ge (1-\frac{2}{m}+o(\frac{1}{m}))\binom{m}{\lfloor \frac{m}{2}\rfloor}$ so the order of magnitude of the upper bound on $\binom{m}{\lfloor \frac{m}{2}\rfloor}-s^*_m$ cannot be improved without further ideas.

\begin{remark}
    As ChatGPT pointed out, $t_0$ can be chosen to be 5 if instead of $\mathbb{F}_2$ as in \cite{EllisIvanLeader}, one works over $\mathbb{F}_3$: for every $x\in [n]$ pick a random label $\phi(x)\in \mathbb{F}_3^r$. Then $\cF_{\phi,r}=\{Z\in \binom{[n]}{r}:\phi(Z)$ is a basis$\}$ is $(r,5)$-daisy free, since if $X$ is the stem of the daisy, then $\mathbb{F}_3^r/\langle \phi(X)\rangle$ contains only 4 one-dimensional subspace, while there are 5 petals to the daisy, so two of them, say $x,y$, have images in the same one-dimensional subspace, so $X\cup \{x,y\}$ is not a basis. The expected density of $\cF_{\phi,\lfloor \frac{n}{2}\rfloor}$ is at least $\Delta_3=\prod_{i=1}^\infty (1-3^{-i})>0.56$.
\end{remark}

\medskip

\begin{proof}[Proof of the lower bound in Theorem \ref{smstrong}]
    The proof goes along the lines of \cite{Patkos}, just sharpens the calculations of Lemma 2.3 (ii) of \cite{Patkos}. The \textit{Lubell mass} of a family $\cF\subseteq 2^{[n]}$ is $\lambda_n(\cF)=\sum_{F\in \cF}\binom{n}{|F|}^{-1}$, the expected value of $|\cF\cap \cC|$ where $\cC$ is a maximal chain in $2^{[n]}$ taken uniformly at random from the set $\mathbf{C}_n$ of all $n!$ such chains. As $\binom{n}{|F|}^{-1}\ge \binom{n}{\lfloor \frac{n}{2}\rfloor}^{-1}$, $\lambda_n(\cF)\le c$ implies $|\cF|\le c\binom{n}{\lfloor \frac{n}{2}\rfloor}$ and a slightly more careful reasoning (see \cite[Lemma 3.2]{GriggsLiLu}) shows that if $k$ is an integer then $\lambda_n(\cF)\le k$ implies $|\cF|\le \Sigma(n,k)$, the sum of the $k$ largest binomial coefficients
    of order $n$. Thus it is enough to prove that if $\binom{m^*_s}{\lfloor \frac{m^*_s}{2}\rfloor}-s\ge \frac{C}{(m^*_s)^{1/3}}\binom{m^*_s}{\lfloor \frac{m^*_s}{2}\rfloor}$ for some large enough $C$ and $s$ itself is large enough, then for any strong $D_s$-free family $\cF\subseteq 2^{[n]}$ we have $\lambda_n(\cF)\le m^*_s$. The next lemma is essentially Lemma 2.3 (ii) of \cite{Patkos}, with a sharper calculation.
    
\begin{lemma}\label{sharper}
    There exist $C$ and $s_0$ such that the following holds. If $s\ge s_0$, $\binom{m^*_s}{\lfloor \frac{m^*_s}{2}\rfloor}-s\ge \frac{C}{(m^*_s)^{1/3}}\binom{m^*_s}{\lfloor \frac{m^*_s}{2}\rfloor}$ and $\cG\subseteq 2^{[N]}$ does not contain antichains of size $s$, then $\lambda_N(\cG)\le m^*_s$. 
\end{lemma}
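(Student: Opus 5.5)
The plan is the chain-partitioning approach of \cite{Patkos} and \cite{GriggsLiLu}, with one new estimate for the interval problem. Write $m=m^*_s$ and $M=\binom{m}{\lfloor m/2\rfloor}$. For every maximal chain $\cC\in\bC_N$ with $\cG\cap\cC\neq\emptyset$, let $A_\cC$ and $B_\cC$ be the smallest and largest members of $\cG\cap\cC$. Group the chains according to the pair $(A,B)=(A_\cC,B_\cC)$. Conditioned on a given pair, the part of $\cC$ between $A$ and $B$ is a uniformly random maximal chain of the interval $[A,B]\cong 2^{[d]}$, where $d=|B|-|A|$. Therefore $\lambda_N(\cG)$ is a weighted average of the quantities $2+\lambda_d(\cH)$ (or $1$ when $A=B$). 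Here $\cH$ is the family $\cG\cap[A,B]\setminus\{A,B\}$, viewed inside $2^{[d]}$ without its two endpoints, and it still contains no antichain of size $s$. So it suffices to prove the interval statement: for every $d$ and every such $\cH$, we have $\lambda_d(\cH)\le m-2$. For $d\le m-1$ this is trivial, since there are only $d-1\le m-2$ interior layers.

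The heart of the argument is the case $d=m$, and this is where the exponent $1/3$ comes from. Fix a band of $w+1$ layers around the middle of $2^{[m]}$, with $w\approx m^{1/3}$. We use two facts.
\begin{itemize}
\item Each set in the band has size $i$ with $|i-m/2|\le w$, and for such $i$ we have $\binom{m}{i}\ge(1-O(w^2/m))M$.
\item By Dilworth's theorem, $\cH\cap\text{band}$ is covered by $s-1$ chains, each of which has at most $w+1$ elements inside the band.
\end{itemize}
Hence
\[
\lambda_m(\cH\cap\text{band})\le \frac{(s-1)(w+1)}{M}\bigl(1+O(w^2/m)\bigr)\le (w+1)\Bigl(1-\frac{C}{m^{1/3}}\Bigr)\bigl(1+O(m^{-1/3})\bigr)\le w+1-\bigl(C-O(1)\bigr)\frac{w+1}{m^{1/3}}.
\]
For $C$ large this is at most $w$. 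Each interior layer outside the band contributes at most $1$ to the Lubell mass, and there are $m-1-(w+1)$ of them. Adding up gives $\lambda_m(\cH)\le m-2$. The choice $w\sim m^{1/3}$ balances the gain $Cw/m^{1/3}$ against the binomial error $w^3/m$, which is why the hypothesis takes the form $C m^{-1/3}M$.

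For $d>m$ we need to lose $d-m+1$ rather than $1$. The plan is the same banding idea with a wider band, now using that $s\le M$ is tiny compared with the middle binomial coefficients of $2^{[d]}$. First, bound each layer crudely by
\[
\lambda_d(\cH)\le\sum_{i=1}^{d-1}\min\Bigl(1,\frac{s}{\binom{d}{i}}\Bigr).
\]
Let $i_0$ be the largest index with $\binom{d}{i_0}\le s$. The layers with $i\le i_0$ or $i\ge d-i_0$ contribute at most $2i_0$. One then checks that $2i_0\le m-3$ once $d\ge m+1$, because $\binom{d}{i}$ grows with $d$. The remaining middle layers contribute a sum that decays geometrically away from $i_0$, hence at most $O(1)$, and I expect this can be made at most $1$. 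For $d$ only slightly larger than $m$ this crude bound may not suffice. In that range I would instead run the Dilworth/band argument around the middle of $2^{[d]}$. There $s/\binom{d}{\lfloor d/2\rfloor}\le 2^{-(d-m)}(1+o(1))$, so a band of width $\Theta(d-m+1)$ already loses enough mass.

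I expect the main obstacle to be making the $d>m$ range fully rigorous and uniform, in particular the transitional values $d=m+1,m+2,\dots$. There the two regimes have to be glued together with explicit constants, and the floor and ceiling effects (for instance, the parity of $m$ and $d$) have to be checked. The case $d=m$ and the reduction via the min–max partition I expect to be routine.
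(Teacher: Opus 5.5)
\textbf{The case $d>m$ is a genuine gap.} Your case $d=m$ is correct and is essentially the paper's computation. A band of about $m^{1/3}$ middle layers, each of size $(1-O(m^{-1/3}))\binom{m}{\lfloor m/2\rfloor}$, loses Lubell mass at least $1$ once $C$ is large.

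Your grouping of chains by minimum and maximum is valid but not needed. Every layer of $\cG$ is an antichain, so the paper bounds $\lambda_N(\cG)\le R_s(N)=\sum_{j=0}^N\min\{1,\frac{s-1}{\binom{N}{j}}\}$ directly for every $N$. Your Dilworth/band estimate is never stronger than this per-layer sum. So for $d>m$ the only question is whether $\sum_{i=1}^{d-1}\min\{1,s/\binom{d}{i}\}\le m-2$. That is, how to estimate this sum, not whether to use a ``crude'' or a ``band'' bound.

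The paper does not compute this range either. It imports it from Lemma 2.3(ii) of \cite{Patkos}: for large $s$, $R_s(N)$ is non-increasing for $N\ge m^*_s$, so everything reduces to $N=m^*_s$. Your sketch for this range has a false step. Write $d=m+t$ and take $s$ close to $\binom{m}{\lfloor m/2\rfloor}$.
\begin{itemize}
\item The layers with $\binom{d}{i}>s$ are those within distance of order $\sqrt{dt}$ of $d/2$. There, consecutive binomial coefficients differ only by factors $1+O(\sqrt{t/d})$.
\item So the middle sum does not decay geometrically. It is of order $\sqrt{d/t}$, not $O(1)$.
\item For example, at $d=m+1$ one has $2i_0\approx m-1.18\sqrt m$, while the middle layers contribute about $0.76\sqrt m$.
\item The inequality still holds there, but only by comparing two quantities of order $\sqrt m$. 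Your accounting ``$2i_0\le m-3$ plus at most $1$'' fails.
\end{itemize}
Hence your crude regime works only for $t\ge cm$. The band argument must then handle all $1\le t\le cm$, not just ``$d$ slightly larger than $m$''. Two things break in that range.
\begin{itemize}
\item Your bound $s/\binom{d}{\lfloor d/2\rfloor}\le 2^{-t}(1+o(1))$ is off by a factor of about $\sqrt{d/m}$.
\item A band of half-width $\Theta(t)$ leaves the $\sqrt d$-window around the middle. You need an explicit estimate such as $\binom{d}{\lfloor d/2\rfloor-x}\ge e^{-\gamma x^2/d}\binom{d}{\lfloor d/2\rfloor}$ for $1\le x\le d/4$, with an absolute constant $\gamma$.
\end{itemize}
With that estimate you can show that, for $t\ge 2$, $2(t+1)$ middle layers each carry mass at most $1/2$. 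For $t=1$, where the per-layer mass is close to $1/2$, a few extra layers are needed. With such estimates the two regimes overlap and your plan can be completed. As written, however, the case $d>m$ — the part the paper takes from \cite{Patkos} — is not proved.
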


\begin{proof}
    By the condition of the lemma, for every $0\le j\le N$ we have
$|\mathcal G\cap\binom{[N]}j|\le s-1$.
Consequently, 
$ \lambda_{N}(\cG)\le       \sum_{j=0}^N
        \min\{1,\frac{s-1}{\binom Nj}\}=:R_s(N).$ As $R_s(N)\le N+1$, if $N<m^*_s$, then we are done. It was proved in \cite[Lemma 2.3 (ii)]{Patkos} that for large enough $s$, $R_s(N)$ is monotone non-increasing when $N\ge m^*_s$, so it is enough to prove $R_s(m^*_s)\le m^*_s$.

        Note that for any $j$ with $|j|=O((m^*_s)^{1/3})$ we have
        \[
        \frac{\binom{m^*_s}{\lfloor \frac{m^*_s}{2}\rfloor+j}}{\binom{m^*_s}{\lfloor \frac{m^*_s}{2}\rfloor}}=\prod_{i=0}^{|j|-1}\frac{\lfloor \frac{m^*_s}{2}\rfloor-i}{\lfloor \frac{m^*_s}{2}\rfloor+i+1}=1-\frac{2j^2}{m^*_s}+O((m^*_s)^{-2/3}).
        \]
        Therefore, picking $A<B$ such that $A+B=m^*_s$, $B-A+1=\lceil (m^*_s)^{1/3}\rceil$ we have
    \[
    \lambda_{m^*_s}(\cG)\le m^*_s+1-(B-A+1)+\sum_{j=A}^B\frac{s-1}{\binom{m^*_s}{j}}\le m^*_s-(B-A)+\lceil (m^*_s)^{1/3}\rceil\frac{\binom{m^*_s}{\lfloor \frac{m^*_s}{2}\rfloor}-\frac{C}{(m^*_s)^{1/3}}\binom{m^*_s}{\lfloor \frac{m^*_s}{2}\rfloor}}{(1-\frac{1}{2(m^*_s)^{1/3}})\binom{m^*_s}{\lfloor \frac{m^*_s}{2}\rfloor}}.
    \]
    If $C>3/2$, then the last term of the RHS is at most $\lceil (m^*_s)^{1/3}\rceil-1=B-A$ and thus $\lambda_{m^*_s}(\cG)\le m^*_s$ as claimed.
\end{proof}
Let $\cF\subseteq 2^{[n]}$ be a strong $D_s$-free family. Consider the min-max partition of $\bC_n=\cup_{F\subseteq F', F,F'\in \cF}\bC_{F,F'}\cup \bC_\emptyset$, where $\bC_{F,F'}$ is the set of maximal chains $\cC\in \bC_n$ such that the smallest set of $\cF\cap \cC$ is $F$ and the largest set in $\cF\cap \cC$ is $F'$, while $\bC_\emptyset$ is the set of maximal chains that do not contain sets from $\cF$. For $F\subseteq F'$, let $\cF_{F,F'}=\{H\in \cF:F\subseteq H\subseteq F'\}$. If $F,F'\in \cF$, then as $\cF$ is strong $D_s$-free, $\cF_{F,F'}$ cannot contain antichains of size $s$, and thus Lemma \ref{sharper} implies that $\lambda_{|F'|-|F|}(\cF_{F,F'})\le m^*_s$. As $\lambda_n(\cF)$ is a weighted average of the $\lambda_{|F'|-|F|}(\cF_{F,F'})$s and 0 (the contribution of $\bC_\emptyset$), we obtain $\lambda_n(\cF)\le m^*_s$. This implies $|\cF|\le \Sigma(n,m^*_s)$ and thus $\LA(n,D_s)=\Sigma(n,m^*_s)$.
\end{proof}

Finally, let us remark that the smallest $s$ for which it is not known whether $\lim_n\frac{\LA(n,D_s)}{\binom{n}{\lfloor\frac{n}{2}\rfloor}}=m^*_s$ is 10. Also, the smallest poset $P$ for which it is not known whether $\lim_n\frac{\LA(n,P)}{\binom{n}{\lfloor\frac{n}{2}\rfloor}}=e^*(P)$ is the Harp poset $H_{1,2}$ on five elements $a,b,b',c,d$ with $a<b<c<d$, $a<b'<d$ being all its cover relations.

\end{document}